\tikzset{tab/.style={matrix of math nodes,column sep=-.35, row sep=-.35,text height=7pt,text width=7pt,align=center,inner sep=2,font=\footnotesize}}
\newcommand{\g}{\mathfrak{g}}
\newcommand{\HH}{\mathcal{H}}
\newcommand{\inner}[2]{\left\langle #1, #2 \right\rangle}
\newcommand{\iso}{\cong}
\renewcommand{\mid}{:}
\newcommand{\QQ}{\mathbf{Q}}
\newcommand{\RC}{\operatorname{RC}} 
\newcommand{\wt}{\mathrm{wt}}
\newcommand{\ZZ}{\mathbf{Z}}
\newcommand{\case}[1]{\vspace{6pt} \noindent \underline{#1}:}
\definecolor{darkred}{rgb}{0.7,0,0} 
\newcommand{\defn}[1]{{\color{darkred}\emph{#1}}} 
\lstdefinelanguage{Sage}[]{Python}
{morekeywords={False,True},sensitive=true}
\protected\def\specialmergetwolists{%
  \begingroup
  \@ifstar{\def\cnta{1}\@specialmergetwolists}
    {\def\cnta{0}\@specialmergetwolists}%
}
\def\@specialmergetwolists#1#2#3#4{%
  \def\tempa##1##2{%
    \edef##2{%
      \ifnum\cnta=\@ne\else\expandafter\@firstoftwo\fi
      \unexpanded\expandafter{##1}%
    }%
  }%
  \tempa{#2}\tempb\tempa{#3}\tempa
  \def\cnta{0}\def#4{}%
  \foreach \x in \tempb{%
    \xdef\cnta{\the\numexpr\cnta+1}%
    \gdef\cntb{0}%
    \foreach \y in \tempa{%
      \xdef\cntb{\the\numexpr\cntb+1}%
      \ifnum\cntb=\cnta\relax
        \xdef#4{#4\ifx#4\empty\else,\fi\x#1\y}%
        \breakforeach
      \fi
    }%
  }%
  \endgroup
}
\DeclareDocumentCommand\rpp{ m m g }{
	\foreach \x [count=\s from 1] in {#1}{
	        {\ifnum\s=1
	                \draw (0,-\s)--(\x,-\s);
	                \fi}
	   \draw (0,-\s-1) to (\x,-\s-1);
	   \foreach \y in {0, ..., \x} {\draw (\y,-\s)--(\y,-\s-1);}
	}
	\specialmergetwolists{/}{#1}{#2}\ziplist
	\foreach \x/\y [count=\yi from 1] in \ziplist{
	    \node[anchor=west,font=\scriptsize] at (\x,-\yi - .5) {$\y$};
	}
	\IfValueT {#3}
	{\foreach \z [count=\zi from 1] in {#3} {\node[anchor=east,font=\scriptsize] at (0,-\zi - .5) {$\z$};}}
	{}
}
\theoremstyle{plain}
\newtheorem{thm}{Theorem}[section]
\newtheorem{lemma}[thm]{Lemma}
\newtheorem{prop}[thm]{Proposition}
\newtheorem{cor}[thm]{Corollary}
\theoremstyle{definition}
\newtheorem{dfn}[thm]{Definition}
\newtheorem{ex}[thm]{Example}
\newtheorem{remark}[thm]{Remark}
\numberwithin{equation}{section}
\numberwithin{figure}{section}
\numberwithin{table}{section}
\begin{document}
\title{Rigged configurations and the $\ast$-involution}
\author{Ben Salisbury}
\address{Department of Mathematics, Central Michigan University, Mt.\ Pleasant, MI 48859}
\email{ben.salisbury@cmich.edu}
\urladdr{http://people.cst.cmich.edu/salis1bt/}
\thanks{B.S.\ was partially supported by CMU Early Career grant \#C62847}
\author{Travis Scrimshaw}
\address{Department of Mathematics, University of Minnesota, Minneapolis, MN 55455}
\email{tscrimsh@umn.edu}
\urladdr{http://math.umn.edu/~tscrimsh/}
\thanks{T.S.\ was partially supported by RTG grant NSF/DMS-1148634.}
\keywords{crystal, rigged configuration, $\ast$-involution}
\subjclass[2010]{05E10, 17B37}

\begin{abstract}
We give an explicit description of the $\ast$-involution on the rigged configuration model for $B(\infty)$.
\end{abstract}

\maketitle

\section{Introduction}

The $\ast$-involution, sometimes referred to as Kashiwara's involution, is an involution on the crystal $B(\infty)$ that is induced from a subtle involutive antiautomorphism of $U_q(\g)$.  
The importance of $\ast$ in the theory of crystal bases and their applications cannot be understated.  Here are just a few of its applications.
\begin{enumerate}
\item Saito~\cite{Sai94} used the involution during the proof that Lusztig's PBW basis has a crystal structure isomorphic to $B(\infty)$, provided that $\g$ is a finite-dimensional semisimple Lie algebra.
\item Kamnitzer and Tingley~\cite{KT09} generalize the definition of the crystal commutor of Henriques and Kamnitzer \cite{HK06} in terms of the $\ast$-involution. This leads to a proof by Savage~\cite{Sav09} that the category of crystals forms a coboundary category over any symmetrizable Kac-Moody algebra.
\item In affine type $A$, Jacon and Lecouvey~\cite{JL09} prove that $\ast$ coincides with the Zelevinsky involution~\cite{MW86,Zel80} on the set of simple modules for the affine Hecke algebra.
\end{enumerate}
Several combinatorial realizations of the $\ast$-involution are known in the literature.  For example, Lusztig~\cite{Lusztig93} gave a description of the behavior of $\ast$ on Lusztig's PBW basis in the finite types, Kamnitzer~\cite{Kamnitzer07} showed that $\ast$ acts on an MV polytope by negation, Kashiwara and Saito \cite{KS97} gave a description of $\ast$ in terms of quiver varieties~\cite{KS97}, and Jacon and Lecouvey~\cite{JL09} give a description of the involution in terms of the multisegment model.  Such model-specific calculations of the $*$-crystal operators are important as, {\it a priori}, the algorithm for computing the action of these operators is not efficient \cite[Thm. 2.2.1]{K93} (see also \cite[Prop. 8.1]{K95}).

In this paper, the authors continue their development of the rigged configuration model of $B(\infty)$~\cite{SalS15,SalS15II}.  Rigged configurations are sequences of partitions, one for every node of the underlying Dynkin diagram, where each part is paired with an integer, satisfying certain conditions.  These objects arose as an important tool in mathematical physics from the studies of the Bethe Ansatz by Kerov, Kirillov, and Reshetikhin~\cite{KKR86,KR86}, and they have been shown to correspond to the action and angle variables of box-ball systems~\cite{KOSTY06}. Additionally, rigged configurations have been used extensively in the theory of Kirillov-Reshetikhin crystals \cite{HKOTT02,HKOTY99,OSS13,OSS03,OSS03II,Sakamoto14,S05,S06,SchillingS15,SW10,Scrimshaw15}. During the course of this study, a crystal structure was given to rigged configurations~\cite{S06, SchillingS15}.

Our description of $\ast$ is as nice as one could hope: in contrast to the definition of $e_a$ and $f_a$ on rigged configurations, one interchanges ``label'' and ``colabel'' to obtain a definition of $e_a^*$ and $f_a^*$ (see Definition \ref{def:RC_star_crystal_ops}).  In turn, applying $\ast$ to a rigged configuration replaces all labels with its corresponding colabels and leaves the partitions fixed (see Corollary~\ref{cor:RC_star_involution}).

The method of proof applied here is to use a classification theorem of $B(\infty)$ asserted by Tingley and Webster~\cite{TW} by translating the $\ast$-involution directly into the classification theorem of Kashiwara and Saito~\cite{KS97} without the use of Kashiwara's embedding.  This classification theorem requires several assertions to be satisfied, and proving these assertions hold in $\RC(\infty)$ with our new $\ast$-crystal operators consumes most of Section \ref{sec:*crystal}.

The (conjectural) bijection $\Phi$ between $U_q'(\g)$-rigged configurations and tensor products of Kirillov-Reshetikhin crystals~\cite{OSS13, OS12, OSS03, OSS03II, OSS03III, S05, SchillingS15, SS2006, Scrimshaw15} is given roughly as follows. It removes the largest row with a colabel of 0, which is the minimal colabel, for each $e_a$ from $b$ to the highest weight element in $B(\Lambda_1)$, where $b$ is the leftmost factor in the tensor product.  
Let $\theta$ be the involution on $U_q'(\g)$-rigged configurations which interchanges labels with colabels on classically highest weight $U_q'(\g)$-rigged configurations and let $\widetilde{\ast}^L$ denote the involution which is the composition of Lusztig's involution and the map sending the result to the classically highest weight element~\cite{S05, SS2006} (where it is also denoted by $\ast$). It is known that $\Phi \circ \theta = \widetilde{\ast}^L \circ \Phi$ on classically highest weight elements. In particular, the latter map reverses the order of the tensor product. Thus, given the description of the crystal commutor, our work suggests there is a strong link between the $\ast$-involution and the bijection $\Phi$. We hope this could lead to a more direct description of the bijection $\Phi$, its related properties, and a (combinatorial) proof of the $X = M$ conjecture of~\cite{HKOTT02, HKOTY99}.

Another model for $B(\infty)$ uses marginally large tableaux, as developed by Hong and Lee \cite{HL08, HL12}.  It is known that the bijection $\Phi$ mentioned above can be extended to a $U_q(\g)$-crystal isomorphism between rigged configurations and marginally large tableaux~\cite{SalS15III} when $\g$ is of finite classical type or type $G_2$.  An ambitious hope of this paper is that it may lead to a description of the $\ast$-crystal structure on marginally large tableaux.  (In finite type $A$, this result is in~\cite{CT15}.) However, this appears to be a hard problem as the bijection $\Phi$ is highly recursive and depends on conditions on colabels, many of which can change under applying the $\ast$-crystal operators.

There is also a model for $B(\infty)$ using Littelmann paths constructed by Li and Zhang~\cite{LZ11}.  From~\cite{PS15}, natural virtualization maps arise to the embeddings on the underlying geometric information. The virtualization map on rigged configurations is also quite natural, giving evidence that rigged configurations encode more geometry than their combinatorial origins and description suggests. This is also evidence that there exists a straightforward and natural explicit combinatorial bijection between rigged configurations and the Littelmann path model. Thus this work could potentially lead to a description of the $\ast$-crystal on the Littelmann path model.

In a similar vein, the virtualization map is known to act naturally on MV polytopes~\cite{JS15,NS08}, also reflecting the geometric information of the root systems via the Weyl group. This is evidence that there should be a natural explicit combinatorial bijection between MV polytopes and rigged configurations (and the Littelmann path model). Moreover, considering the $\ast$-involution, which acts by negation on MV polytopes~\cite{Kamnitzer07, Kamnitzer10}, this work gives further evidence that such a bijection should exist. Furthermore, this bijection would suggest a natural generalization beyond finite type, which the authors expect to recover the KLR polytopes of~\cite{TW}.

This paper is organized as follows.
In Section~\ref{sec:background}, we give the necessary background on crystals and the $\ast$-involution.
In Section~\ref{sec:RC}, we give background information on the rigged configuration model for $B(\infty)$.
In Section~\ref{sec:*crystal}, we give the proof of our main theorem and some consequences.
In Section~\ref{sec:hw_crystals}, we give a description of highest weight crystals using the $\ast$-crystal structure and describe the natural projection from $B(\infty)$ in terms of rigged configurations.

\section{Crystals and the $\ast$-involution}
\label{sec:background}

Let $\g$ be a symmetrizable Kac-Moody algebra with quantized universal enveloping algebra $U_q(\g)$ over $\QQ(q)$, index set $I$, generalized Cartan matrix $A = (A_{ij})_{i,j\in I}$, weight lattice $P$, root lattice $Q$, fundamental weights $\{\Lambda_i \mid i \in I\}$, simple roots $\{\alpha_i \mid i\in I\}$, and simple coroots $\{h_i \mid i\in I\}$.  There is a canonical pairing $\langle\ ,\ \rangle\colon P^\vee \times P \longrightarrow \ZZ$ defined by $\langle h_i, \alpha_j \rangle = A_{ij}$, where $P^{\vee}$ is the dual weight lattice.

An \defn{abstract $U_q(\g)$-crystal} is a set $B$ together with maps
\[
 e_i, f_i \colon B \longrightarrow B\sqcup\{0\},\qquad
\varepsilon_i,\varphi_i\colon B \longrightarrow \ZZ \sqcup \{-\infty\},\qquad
\wt\colon B \longrightarrow P
\]
satisfying certain conditions (see \cite{HK02,K95}).  Any $U_q(\g)$-crystal basis, defined in the classical sense (see \cite{K91}), is an abstract $U_q(\g)$-crystal.  In particular, the negative half $U_q^-(\g)$ of the quantized universal enveloping algebra of $\g$ has a crystal basis which is an abstract $U_q(\g)$-crystal.  We denote this crystal by $B(\infty)$ (rather than the using the entire tuple $(B(\infty),e_i,f_i,\varepsilon_i,\varphi_i,\wt)$), and denote its highest weight element by $u_\infty$.  As a set, one has
\[
B(\infty) = \{  f_{i_d} \cdots  f_{i_2} f_{i_1} u_\infty : i_1,\dots,i_d \in I, \ d \ge 0 \}.
\]
The remaining crystal structure on $B(\infty)$ is
\begin{align*}
\wt( f_{i_d} \cdots  f_{i_2} f_{i_1} u_\infty) &= -\alpha_{i_1}-\alpha_{i_2}-\cdots-\alpha_{i_d} ,\\
\varepsilon_i(b) &= \max\{ k \in \ZZ :  e_ib \neq 0 \}, \\
\varphi_i(b) &= \varepsilon_i(b) + \langle h_i,\wt(b) \rangle. 
\end{align*}
We say that $b\in B(\infty)$ has \defn{depth} $d$ if $b = f_{i_d} \cdots  f_{i_2} f_{i_1} u_\infty$ for some $i_1,\dots,i_d \in I$.

There is a $\QQ(q)$-antiautomorphism $*\colon U_q(\g) \longrightarrow U_q(\g)$ defined by
\[
E_i \mapsto E_i, \qquad
F_i \mapsto F_i, \qquad
q \mapsto q, \qquad
q^h \mapsto q^{-h}.
\]
This is an involution which leaves $U_q^-(\g)$ stable.  Thus, the map $\ast$ induces a map on $B(\infty)$, which we also denote by $*$, and is called the \defn{$\ast$-involution} or \defn{star involution} (and is sometimes known as Kashiwara's involution).  Denote by $B(\infty)^*$ the image of $B(\infty)$ under $*$. 

\begin{thm}[\cite{K93,Lusztig90}]
We have
$
B(\infty)^* = B(\infty).
$
\end{thm}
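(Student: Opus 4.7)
The plan is to reduce the statement to a compatibility between $*$ and the Kashiwara lattice $L(\infty) \subset U_q^-(\g)$. First, observe that $*$ preserves $U_q^-(\g)$, since this subalgebra is generated by the $F_i$ and $*$ fixes each $F_i$ (reversing products still lands in $U_q^-(\g)$). In particular, $*(1) = 1$, so on the level of $L(\infty)/qL(\infty)$ one has $*(u_\infty) = u_\infty$.

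The core step is to show $*(L(\infty)) = L(\infty)$, so that $*$ descends to an involution on $L(\infty)/qL(\infty)$. I would introduce the $*$-conjugated Kashiwara operators $\widetilde{e}_i^{\,*} := * \circ \widetilde{e}_i \circ *$ and $\widetilde{f}_i^{\,*} := * \circ \widetilde{f}_i \circ *$, and then run a grand-loop induction on the $Q_+$-grading of $U_q^-(\g)$ to prove simultaneously that (i) $L(\infty)$ is stable under $\widetilde{f}_i^{\,*}$ and $\widetilde{e}_i^{\,*}$ for every $i \in I$, and (ii) the induced operators $\overline{f}_i^{\,*}$, $\overline{e}_i^{\,*}$ on $L(\infty)/qL(\infty)$ preserve $B(\infty) \cup \{0\}$. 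The inductive step relies on near-commutation identities between $\widetilde{f}_i$ and $\widetilde{f}_j^{\,*}$ (for $i\neq j$) obtained from the twisted-derivation presentation of $U_q^-(\g)$, together with the fact that $*$ swaps the ``left'' $q$-boson derivations $e_i'$ with their ``right'' analogues.

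Once $*$ is shown to preserve $L(\infty)$ and to descend to a bijection on $B(\infty) \cup \{0\}$ fixing $0$, the equality $B(\infty)^* = B(\infty)$ is immediate: every $b \in B(\infty)$ has the form $f_{i_d} \cdots f_{i_1} u_\infty$, and applying $*$ yields $f_{i_d}^{\,*} \cdots f_{i_1}^{\,*} u_\infty \in B(\infty)$ by the crystal-level operators just constructed.

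The main obstacle is (i)--(ii): the Kashiwara operators are built from the twisted derivations $e_i'$ which single out one-sided multiplication, whereas $*$ reverses multiplication, so one must verify that the lattice $L(\infty)$ -- defined a priori only with respect to the left operators -- is also stable under the right-hand versions. A conceptually cleaner route is to invoke Lusztig's canonical basis $\mathbf{B}$ of $U_q^-(\g)$ from \cite{Lusztig90}, which is $*$-stable by its geometric construction and projects onto $B(\infty)$ in $L(\infty)/qL(\infty)$; this immediately yields $B(\infty)^* = B(\infty)$, matching the two references cited in the theorem.
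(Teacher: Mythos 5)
The paper does not prove this statement; it is imported verbatim from the literature, with \cite{K93,Lusztig90} serving as the proof. So there is no internal argument to compare yours against. Judged on its own terms, your sketch correctly identifies the two standard routes, and they are precisely the ones in the cited references: the grand-loop induction showing $*(L(\infty)) = L(\infty)$ and $*(B(\infty)) = B(\infty)$ is Kashiwara's argument (the hard technical core of \cite{K93}, Thm.~2.2.1, which the present paper itself alludes to when discussing the inefficiency of computing $e_i^*$), and the canonical-basis route is Lusztig's. Two caveats on completeness: first, the grand loop is only gestured at --- the entire difficulty is exactly the obstacle you name, namely that $L(\infty)$ is defined via the left $q$-boson operators $e_i'$ while $*$ reverses multiplication, and Kashiwara resolves this not by direct near-commutation identities but by induction through the embeddings $U_q^-(\g) \hookrightarrow U_q^-(\g) \otimes B_i$, so your inductive step as stated is not yet a proof. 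Second, the canonical-basis shortcut is genuinely immediate only where the geometric (or at least the $*$-stable algebraic) construction of $\mathbf{B}$ is available --- Lusztig's 1990 construction covers symmetric/finite types, whereas the theorem as used in this paper is needed for arbitrary symmetrizable $\g$, for which one must fall back on Kashiwara's algebraic argument. Your outline is sound as a roadmap to the literature, but neither branch is carried far enough to stand alone as a proof.
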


This induces a new crystal structure on $B(\infty)$ with Kashiwara operators
\[
 e_i^* = * \circ  e_i \circ *,\ \ \ \ \ \ 
 f_i^* = * \circ  f_i \circ *,
\]
and the remaining crystal structure is given by
\[
\varepsilon_i^* = \varepsilon_i \circ * , \qquad \qquad
\varphi_i^* = \varphi_i \circ *,
\]
and weight function $\wt$, the usual weight function on $B(\infty)$.
Additionally, for $b\in B(\infty)$ and $i\in I$, define
\begin{equation}
\label{eq:jump}
\kappa_i(b) := \varepsilon_i(b) + \varepsilon_i^*(b) + \langle h_i, \wt(b)\rangle.
\end{equation}
This was called the {\it $i$-jump} in \cite{LV11}.



We will appeal to the following statement from \cite{CT15}, which was proven in a dual form in \cite{TW} based on Kashiwara and Saito's classification theorem for $B(\infty)$ from \cite{KS97}.  First, a \defn{bicrystal} is a set $B$ with two abstract $U_q(\g)$-crystal structures $(B,e_i,f_i,\varepsilon_i,\varphi_i,\wt)$ and $(B,e_i^\star,f_i^\star,\varepsilon_i^\star,\varphi_i^\star,\wt)$ with the same weight function.  In such a bicrystal $B$, we say $b\in B$ is a \defn{highest weight element} if $e_ib = e_i^\star b = 0$ for all $i \in I$.

\begin{prop}
\label{prop:star_properties}
Fix a bicrystal $B$ with highest weight $b_0$ such the crystal data is determined by setting $\wt(b_0) = 0$.  Assume further that, for all $i \neq j$ in $I$ and all $b\in B$, 
\begin{enumerate}
\item\label{item:star1} $f_ib$, $f_i^\star b \neq 0$;
\item\label{item:star2} $f_i^\star f_jb = f_jf_i^\star b$;
\item\label{item:star3} $\kappa_i(b) \ge 0$;
\item\label{item:star4} $\kappa_i(b) = 0$ implies $f_ib = f_i^\star b$;
\item\label{item:star5} $\kappa_i(b) \ge 1$ implies $\varepsilon_i^\star(f_ib) = \varepsilon_i^\star(b)$ and $\varepsilon_i(f_i^\star b) = \varepsilon_i(b)$;
\item\label{item:star6} $\kappa_i(b) \ge 2$ implies $f_if_i^\star b = f_i^\star f_ib$.
\end{enumerate}
Then 
\[
(B,e_i,f_i,\varepsilon_i,\varphi_i,\wt) \iso (B,e_i^\star,f_i^\star,\varepsilon_i^\star,\varphi_i^\star,\wt) \iso B(\infty),\]
with $e_i^\star = e_i^*$ and $f_i^\star = f_i^*$.
\end{prop}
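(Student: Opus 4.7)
The strategy is to reduce this to the Kashiwara--Saito classification of $B(\infty)$ from \cite{KS97}, which characterizes $B(\infty)$ among $U_q(\g)$-crystals with a marked highest weight element by the existence of a compatible family of strict crystal embeddings $\Psi_i \colon B \hookrightarrow B \otimes B_i$, one for each $i \in I$, where $B_i$ is a certain elementary crystal concentrated on a single $i$-string. The plan is to use the second crystal structure on $B$ to build these embeddings explicitly, so that the classification theorem can be applied.

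Concretely, for each $i \in I$ I would define $\Psi_i$ by $b \mapsto \widetilde{b} \otimes f_i^n b_i$, where $n = \varepsilon_i^\star(b)$ and $\widetilde{b} = (e_i^\star)^n b$ is the $\star$-highest element of the $i$-string through $b$. Condition \eqref{item:star2} immediately yields that $\Psi_i$ intertwines $f_j$ for $j \neq i$, since $f_j$ commutes with $(e_i^\star)^n$ and acts trivially on the right tensor factor. The behavior with respect to $f_i$ is governed by the tensor product rule and depends on $\kappa_i(b)$ as defined in \eqref{eq:jump}; conditions \eqref{item:star3}--\eqref{item:star6} are precisely what is needed to make this match. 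When $\kappa_i(b) = 0$, the element $b$ sits at the boundary where $f_i$ should act on the right tensor factor, and \eqref{item:star4} enforces the needed consistency $f_i b = f_i^\star b$. When $\kappa_i(b) \ge 1$, $f_i$ should act on the left factor without altering the $\star$-string, as asserted by \eqref{item:star5}; and \eqref{item:star6} supplies the commutation $f_i f_i^\star b = f_i^\star f_i b$ required when $\kappa_i(b) \ge 2$. Condition \eqref{item:star1} guarantees that all these strings extend far enough that no boundary obstructions arise.

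Once each $\Psi_i$ is verified to be a strict crystal embedding, Kashiwara--Saito furnishes the isomorphism $(B, e_i, f_i, \varepsilon_i, \varphi_i, \wt) \iso B(\infty)$. Observing that conditions \eqref{item:star1}--\eqref{item:star6} are symmetric under interchange of starred and unstarred operators (the quantity $\kappa_i$ being itself symmetric), the identical argument applied to the second structure yields $(B, e_i^\star, f_i^\star, \varepsilon_i^\star, \varphi_i^\star, \wt) \iso B(\infty)$. Composing the two isomorphisms transports the Kashiwara involution on $B(\infty)$ to an involution on $B$ that interchanges the two structures, whence $e_i^\star = e_i^*$ and $f_i^\star = f_i^*$. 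The main obstacle is the direct verification that $\Psi_i$ is a strict morphism of crystals, not merely a set-theoretic map commuting with $f_j$ for $j \neq i$: one must check that $\varepsilon_i$, $\varphi_i$, and $\wt$ evaluated on $b$ agree with the tensor product values on $\Psi_i(b)$. This reduces to a case analysis on whether $\kappa_i(b)$ is $0$, $1$, or at least $2$, in which \eqref{eq:jump} is used repeatedly to translate between the two crystal structures, and each of \eqref{item:star3}--\eqref{item:star6} pins down exactly one case of the tensor product rule.
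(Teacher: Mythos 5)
Your outline is correct and follows essentially the same route as the proof the paper relies on: the paper does not prove Proposition~\ref{prop:star_properties} itself but cites \cite{CT15,TW}, whose argument is precisely the reduction to the Kashiwara--Saito classification via the strict embeddings $\Psi_i(b) = (e_i^\star)^{n} b \otimes f_i^{n} b_i$ with $n = \varepsilon_i^\star(b)$, where $\kappa_i(b)$ appears as the discriminant $\varphi_i\bigl((e_i^\star)^n b\bigr) - n$ in the tensor product rule so that conditions (\ref{item:star3})--(\ref{item:star6}) pin down the cases exactly as you describe. The details you defer (strictness of $\Psi_i$, injectivity, and that $\varepsilon_i^\star(b) = 0$ for all $i$ forces $b = b_0$) are routine and are exactly what those references carry out.
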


However, we will need to slightly weaken the assumptions of Proposition~\ref{prop:star_properties}.

\begin{prop}
\label{prop:weaker_conditions}
Let $(B,e_i,f_i,\varepsilon_i,\varphi_i,\wt)$ and $(B^\star,e_i^\star,f_i^\star,\varepsilon_i^\star,\varphi_i^\star,\wt)$ be highest weight abstract $U_q(\g)$-crystals with the same highest weight vector $b_0 \in B \cap B^\star$, where the remaining crystal data is determined by setting $\wt(b_0) = 0$. Suppose also that (\ref{item:star1})--(\ref{item:star6}) are satisfied. Then
\[
(B,e_i,f_i,\varepsilon_i,\varphi_i,\wt) \iso (B^\star,e_i^\star,f_i^\star,\varepsilon_i^\star,\varphi_i^\star,\wt) \iso B(\infty),
\]
with $e_i^\star = e_i^*$ and $f_i^\star = f_i^*$.
\end{prop}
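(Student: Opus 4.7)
The statement relaxes Proposition~\ref{prop:star_properties} by allowing the two crystal structures to live on \emph{a priori} distinct sets $B$ and $B^\star$, linked only by the shared highest weight element $b_0$. My plan is to show that, under conditions (\ref{item:star1})--(\ref{item:star6}), the two sets must coincide canonically, after which Proposition~\ref{prop:star_properties} applies directly to the resulting bicrystal.

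Conditions (\ref{item:star2}), (\ref{item:star4})--(\ref{item:star6}) each reference expressions involving both operator families applied to the same element, so the honest reading of the hypothesis is that both $\{e_i, f_i\}$ and $\{e_i^\star, f_i^\star\}$ are defined on $B$ as well as on $B^\star$. With this reading, I would identify $B$ with $B^\star$ via an inductive orbit construction. Starting from $b_0 \in B \cap B^\star$, grow a common subset $X$ by closing under all four crystal operators. Condition (\ref{item:star1}) ensures each $f_i$ or $f_i^\star$ operator is defined everywhere and is nonzero, while condition (\ref{item:star2}) gives the mixed commutation $f_i^\star f_j = f_j f_i^\star$ for $i \neq j$; conditions (\ref{item:star4}) and (\ref{item:star6}) handle the degenerate cases of the $i$-jump $\kappa_i$. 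Consequently, the set $X$ is well defined and contained in $B \cap B^\star$. Since $B$ is generated from $b_0$ by the $f_i$ alone and $B^\star$ by the $f_i^\star$ alone, both embed into $X$, forcing $B = B^\star = X$.

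With this identification in hand, the common set carries a bicrystal structure $(B, e_i, f_i, e_i^\star, f_i^\star, \ldots)$ with highest weight $b_0$ satisfying all six hypotheses of Proposition~\ref{prop:star_properties}. That proposition then yields $B \iso B^\star \iso B(\infty)$ with $e_i^\star = e_i^*$ and $f_i^\star = f_i^*$, as desired.

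The principal obstacle is the well-definedness of the orbit $X$: two different sequences of operators nominally producing the same element must agree, and that element must lie in both $B$ and $B^\star$. This is essentially a diamond-lemma argument in which (\ref{item:star2}) handles mixed commutations across different indices, and (\ref{item:star4}) and (\ref{item:star6}) handle the same-index interactions by collapsing $f_i$ to $f_i^\star$ when $\kappa_i(b) = 0$ and commuting $f_i$ with $f_i^\star$ when $\kappa_i(b) \geq 2$, with (\ref{item:star5}) providing the bookkeeping on $\varepsilon_i$ and $\varepsilon_i^\star$ in the intermediate case $\kappa_i(b) = 1$. Once this consistency is verified, the reduction to Proposition~\ref{prop:star_properties} is formal.
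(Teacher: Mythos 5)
Your proposal follows essentially the same route as the paper: both reduce to Proposition~\ref{prop:star_properties} by showing that the closure of $\{b_0\}$ under both families of lowering operators lies in $B \cap B^\star$ (via an induction on depth using condition~(\ref{item:star2}) for distinct indices and conditions~(\ref{item:star4})--(\ref{item:star6}) for the same-index cases according to the value of $\kappa_i$), whence $B = B \cap B^\star = B^\star$. The inductive step you defer as a ``diamond-lemma'' verification is exactly what the paper's proof carries out, so the plan is sound and matches the paper's argument.
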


\begin{proof}
We prove that $B \cap B^\star$ is closed under $f_i$ and $f_i^\star$ by using induction on the depth and making repeated use of conditions (\ref{item:star1})--(\ref{item:star6}) above. The base case is depth $0$, where we just have $b_0$. Suppose all elements of depth at most $d$ in $B$ and $B^\star$ are in $B \cap B^\star$. Next, fix some $b \in B \cap B^\star$ at depth $d$. If $\kappa_i(b) = 0$, then $f_i b = f_i^\star b$ for all $i\in I$. If $i \neq j$, then $f_i^\star f_j b' = f_j f_i^\star b'$. Hence $f_i^* b \in B \cap B^\star$, and $f_j b \in B \cap B^\star$ by our induction assumption and that $B$ (resp., $B^\star$) is closed under $f_j$ (resp., $f_i^\star$). A similar argument shows that $f_i b \in B \cap B^\star$ if $\kappa_i(b) \geq 1$ since $\kappa_i(b') \geq 2$. Therefore $B = B \cap B^\star = B^\star$ since $B \cap B^\star$ is closed under $f_i$ and $f_j^\star$ and generated by $b_0$ (along with $B$ and $B^\star$). Thus the claim follows by Proposition~\ref{prop:star_properties}.
\end{proof}

\section{Rigged configurations}
\label{sec:RC}

Let $\HH = I \times \ZZ_{>0}$. A rigged configuration is a sequence of partitions $\nu = (\nu^{(a)} \mid a \in I)$ such that each row $\nu_i^{(a)}$ has an integer called a \defn{rigging}, and we let $J = \bigl(J_i^{(a)} \mid (a, i) \in \HH \bigr)$, where $J_i^{(a)}$ is the multiset of riggings of rows of length $i$ in $\nu^{(a)}$. We consider there to be an infinite number of rows of length $0$ with rigging $0$; i.e., $J_0^{(a)} = \{0, 0, \dotsc\}$ for all $a \in I$. The term rigging will be interchanged freely with the term \defn{label}.  We identify two rigged configurations $(\nu, J)$ and $(\widetilde{\nu}, \widetilde{J})$ if 
\[
J_i^{(a)} = \widetilde{J}_i^{(a)}
\]
for any fixed $(a, i) \in \HH$. Let $(\nu, J)^{(a)}$ denote the rigged partition $(\nu^{(a)}, J^{(a)})$.

Define the \defn{vacancy numbers} of $\nu$ to be 
\begin{equation}
\label{eq:vacancy}
p_i^{(a)}(\nu) = p_i^{(a)} = - \sum_{(b,j) \in \HH} A_{ab} \min(i, j) m_j^{(b)},
\end{equation}
where $m_i^{(a)}$ is the number of parts of length $i$ in $\nu^{(a)}$.  The \defn{corigging}, or \defn{colabel}, of a row in $(\nu,J)^{(a)}$ with rigging $x$ is $p_i^{(a)} - x$.  In addition, we can extend the vacancy numbers to
\[
p_{\infty}^{(a)} = \lim_{i\to\infty} p_i^{(a)} =  - \sum_{b \in I} A_{ab} \lvert \nu^{(b)} \rvert
\]
since $\sum_{j=1}^{\infty} \min(i,j) m_j^{(b)} = \lvert \nu^{(b)} \rvert$ for $i \gg 1$.  Note this is consistent with letting $i = \infty$ in Equation~\eqref{eq:vacancy}.

Let $\RC(\infty)$ denote the set of rigged configurations generated by $(\nu_{\emptyset}, J_{\emptyset})$, where $\nu_{\emptyset}^{(a)} = 0$ for all $a \in I$, and closed under the crystal operators as follows.

\begin{dfn}
\label{def:RC_crystal_ops}
Fix some $a \in I$, and let $x$ be the smallest rigging in $(\nu,J)^{(a)}$.
\begin{itemize}
\item[\defn{$e_a$}:] If $x =0$, then $e_a(\nu, J) = 0$. Otherwise, let $r$ be a row in $(\nu, J)^{(a)}$ of minimal length $\ell$ with rigging $x$. Then $e_a(\nu, J)$ is the rigged configuration which removes a box from row $r$, sets the new rigging of $r$ to be $x+1$, and changes all other riggings such that the coriggings remain fixed.

\item[\defn{$f_a$}:] Let $r$ be a row in $(\nu, J)^{(a)}$ of maximal length $\ell$ with rigging $x$. Then $f_a(\nu, J)$ is the rigged configuration which adds a box to row $r$, sets the new rigging of $r$ to be $x-1$, and changes all other riggings such that the coriggings remain fixed.
\end{itemize}
\end{dfn}

We define the remainder of the crystal structure on $\RC(\infty)$ by
\begin{gather*}
\varepsilon_a(\nu, J) = \max \{ k \in \ZZ \mid e_a^k(\nu, J) \neq 0 \}, \hspace{20pt} \varphi_a(\nu, J) = \inner{h_a}{\wt(\nu,J)} + \varepsilon_a(\nu, J),
\\ \wt(\nu, J) = -\sum_{a \in I} \lvert \nu^{(a)} \rvert \alpha_a.
\end{gather*}
From this structure, we have $p_\infty^{(a)} = \inner{h_a}{\wt(\nu,J)}$ for all $a\in I$.

\begin{thm}[{\cite{SalS15, SalS15II}}]
\label{thm:binf_isomorphism}
Let $\g$ be of symmetrizable type. Then $\RC(\infty) \iso B(\infty)$ as $U_q(\g)$-crystals.
\end{thm}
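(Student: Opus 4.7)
The plan is to apply Proposition~\ref{prop:weaker_conditions}. I take $B = \RC(\infty)$ with the crystal structure of Definition~\ref{def:RC_crystal_ops}, and let $B^\star$ be the set of rigged configurations generated from $(\nu_\emptyset, J_\emptyset)$ by starred operators $f_a^\star$ obtained from Definition~\ref{def:RC_crystal_ops} by interchanging \emph{rigging} and \emph{corigging} throughout, with $e_a^\star$, $\varepsilon_a^\star$, $\varphi_a^\star$ defined analogously and the same weight function $\wt$. Every corigging of $(\nu_\emptyset, J_\emptyset)$ is $0$, so both structures have $(\nu_\emptyset, J_\emptyset)$ as their common highest weight element; moreover, $\wt$ depends only on the underlying shape, so the weight functions on $B$ and $B^\star$ agree. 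It thus suffices to verify conditions (\ref{item:star1})--(\ref{item:star6}) of Proposition~\ref{prop:star_properties}.

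First I would check that $B$ and $B^\star$ are abstract $U_q(\g)$-crystals and that (\ref{item:star1}) holds. The relations $\varphi_a - \varepsilon_a = \langle h_a, \wt\rangle$ and $e_a f_a = \id$ (where defined) follow by tracking the single row that is modified by each operator. The operator $f_a$ is always nonzero because the convention $J_0^{(a)} = \{0, 0, \dotsc\}$ forces the minimum rigging $x$ in $(\nu, J)^{(a)}$ to be at most $0$, and among rows with rigging $x$ a well-defined maximum length exists; by symmetry $f_a^\star$ is also always nonzero. For (\ref{item:star2}), the key observation is that $f_j$ acts only on the $j$-th rigged partition and, by construction, preserves every corigging in the other partitions; dually, $f_i^\star$ acts only on the $i$-th rigged partition and preserves every rigging in the other partitions. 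Thus $f_j$ does not disturb the corigging data that $f_i^\star$ consults, and $f_i^\star$ does not disturb the rigging data that $f_j$ consults, so both compositions add a box to the same row of $(\nu,J)^{(i)}$ and the same row of $(\nu,J)^{(j)}$, yielding identical configurations.

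Conditions (\ref{item:star3})--(\ref{item:star6}) reduce to a local analysis of the $i$-th rigged partition. I would compute $\varepsilon_i(\nu,J)$ and $\varepsilon_i^\star(\nu,J)$ explicitly in terms of the minimum rigging, the minimum corigging, and the relevant row lengths, then combine these with $\langle h_i, \wt(\nu,J)\rangle = p_\infty^{(i)}$ to extract a row-by-row inequality giving $\kappa_i(\nu,J) \geq 0$. Condition (\ref{item:star4}) is the equality case, where the longest minimum-rigging row and the longest minimum-corigging row must coincide, forcing $f_i(\nu,J) = f_i^\star(\nu,J)$; conditions (\ref{item:star5}) and (\ref{item:star6}) follow by tracking which row each operator acts on and how the vacancy numbers shift. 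The main obstacle, I expect, is the bookkeeping for (\ref{item:star6}): I must verify that in either composition the second operator still selects the row that the original configuration dictated, and for this I would leverage the gap $\kappa_i \geq 2$ to rule out any collision between the two chosen rows, so that each operator leaves the other's selection intact.
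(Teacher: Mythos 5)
Your strategy---run Proposition~\ref{prop:weaker_conditions} on the pair consisting of $\RC(\infty)$ and the closure of $(\nu_\emptyset,J_\emptyset)$ under the label/colabel-swapped operators---is not the proof of Theorem~\ref{thm:binf_isomorphism} given in its cited sources \cite{SalS15,SalS15II}, which instead verify the Kashiwara--Saito recognition criteria \cite{KS97} via the tensor-product embeddings $B\to B\otimes B_i$. It is, however, precisely the route taken in Section~\ref{sec:*crystal} of this paper, where $\RC(\infty)\iso B(\infty)$ is recovered as part of the conclusion of Proposition~\ref{prop:weaker_conditions}; the argument is non-circular, so the plan is viable in principle. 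The problem is that all of the substance is deferred: conditions (\ref{item:star4})--(\ref{item:star6}) are exactly Lemmas~\ref{lemma:kappa0}, \ref{lemma:kappa1}, and \ref{lemma:kappa2}, each a multi-case convexity argument occupying a page or more, and your one-sentence descriptions do not establish them. For condition (\ref{item:star3}) you should also commit to an actual inequality: since the length-$0$ rows force $x_\ell\le 0$ and $x_c\le 0$, one has $\kappa_a = -x_\ell - x_c + p_\infty^{(a)}$, and taking a longest row of $\nu^{(a)}$, say of length $L$ with rigging $y$, gives $x_\ell + x_c \le y + (p_L^{(a)}-y) = p_L^{(a)} \le p_\infty^{(a)}$, the last step because $p_j^{(a)}-p_{j-1}^{(a)} = -\sum_b A_{ab}\sum_{k\ge j} m_k^{(b)} \ge 0$ for $j>L$. (The paper instead proves (\ref{item:star3}) by induction on depth using (\ref{item:star4}) and (\ref{item:star5}).)

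The one step in your outline that would fail as stated is condition (\ref{item:star6}). You propose to ``leverage the gap $\kappa_i\ge 2$ to rule out any collision between the two chosen rows, so that each operator leaves the other's selection intact.'' That premise is false: $f_a$ and $f_a^*$ can select the \emph{same} row even when $\kappa_a(\nu,J)\ge 2$. The paper's running example already exhibits this: in Example~\ref{ex:kappa2} one has $\kappa_4(\nu,J)=2$ while $\nu^{(4)}$ consists of a single row of length $1$ with rigging $0$ and corigging $0$, so $f_4$ and $f_4^*$ necessarily act on that same row and produce different rigged configurations, yet $f_4 f_4^* = f_4^* f_4$. Accordingly, the proof of Lemma~\ref{lemma:kappa2} must treat the case $r=r^*$ head-on (in the subcase $p_{i+1}^{(a)}\ge p_i^{(a)}+2$ with $m_i^{(a)}=1$, the second operator is shown by convexity to act on a strictly shorter row, or on a length-$0$ row), and even in the cases $r\ne r^*$ the claim that each operator preserves the other's selection is established by contradiction, not by inspection. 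Without this case analysis the commutativity in (\ref{item:star6}) is unproved, and with it your argument essentially reproduces Section~\ref{sec:*crystal} of the paper.
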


\begin{prop}[{\cite{SalS15, S06}}]
\label{prop:ep_phi}
Let $(\nu, J) \in \RC(\infty)$ and fix some $a \in I$. Let $x$ denote the smallest label in $(\nu,J)^{(a)}$. Then we have
\[
\varepsilon_a(\nu, J) = -\min(0, x) \hspace{40pt} \varphi_a(\nu, J) = p_{\infty}^{(a)} - \min(0, x).
\]
\end{prop}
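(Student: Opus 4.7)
The $\varphi_a$ formula follows directly from the $\varepsilon_a$ formula via
\[
\varphi_a(\nu, J) = \inner{h_a}{\wt(\nu, J)} + \varepsilon_a(\nu, J) = p_\infty^{(a)} + \varepsilon_a(\nu, J),
\]
noted just before the proposition, so the plan reduces to proving $\varepsilon_a(\nu, J) = -\min(0, x)$. The case $x = 0$ is handled by the definition of $e_a$, which outputs zero. For $x < 0$ I would induct on $-x$, relying on the key claim that one application of $e_a$ takes the smallest rigging in the $a$-th partition from $x$ to $x + 1$. Granting this, the inductive hypothesis gives $\varepsilon_a(e_a(\nu, J)) = -\min(0, x + 1) = -(x + 1)$, hence $\varepsilon_a(\nu, J) = 1 - (x + 1) = -x$, closing the induction.

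To establish the claim, let $r$ be the minimal-length row of $(\nu, J)^{(a)}$ with rigging $x$, and write $\ell$ for its length. Removing a box from $r$ changes the multiplicities only by $m_\ell^{(a)} \mapsto m_\ell^{(a)} - 1$ and $m_{\ell - 1}^{(a)} \mapsto m_{\ell - 1}^{(a)} + 1$, so \eqref{eq:vacancy} together with $A_{aa} = 2$ gives that $p_i^{(a)}$ increases by exactly $2$ for $i \geq \ell$ and is unchanged for $i < \ell$. Since $e_a$ preserves every corigging other than that of $r$, the riggings in $(\nu, J)^{(a)}$ split into three contributions: row $r$ itself is reassigned rigging $x + 1$; rows of length $< \ell$ are untouched, and by minimality of $\ell$ among rows with rigging $x$ already had rigging $\geq x + 1$; rows of length $\geq \ell$ have their riggings bumped up by $2$ to keep their coriggings fixed, so become $\geq x + 2$. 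These three cases together show the new smallest rigging is exactly $x + 1$.

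The principal obstacle is the edge case $\ell = 1$, in which row $r$ becomes a length-$0$ row with rigging $x + 1$ that must be reconciled with the convention $J_0^{(a)} = \{0, 0, \dotsc\}$. The cleanest fix is to prove an auxiliary invariant in parallel with the main induction: for every $(\nu, J) \in \RC(\infty)$ with smallest rigging $x < 0$ in $(\nu, J)^{(a)}$ and $\ell$ the minimal length of a row of $(\nu, J)^{(a)}$ realizing this rigging, one has $x + \ell \geq 0$. This can be verified by tracing the action of $f_a$ back from $u_\infty$, using that $f_a$ increases the length of its chosen row by $1$ while lowering the rigging by $1$, and that the other affected riggings in $(\nu, J)^{(a)}$ satisfy the corresponding bound. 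The invariant forces $\ell = 1$ to imply $x = -1$, so the shortened row $r$ cleanly merges into the standard length-$0$ collection and the count $\varepsilon_a = -x$ is unaffected.
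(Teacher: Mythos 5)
Your proof is correct and takes essentially the same route as the paper, which cites \cite{SalS15, S06} for this statement but proves the dual Proposition~\ref{prop:ep_phi_star} by exactly this induction: show that one application of the raising operator increases the minimal (co)label from $x$ to $x+1$ by splitting the rows of $(\nu,J)^{(a)}$ into the selected row, the shorter rows (labels unchanged and already $\geq x+1$ by minimality of $\ell$), and the longer rows (labels shifted by $A_{aa}=2$). Your extra treatment of the $\ell=1$ edge case is a detail the paper's version of the argument does not spell out, and the invariant you need there does hold in the only relevant instance (a length-one row in $\nu^{(a)}$ always has rigging at least $-1$, by induction along $f$-strings from $(\nu_{\emptyset},J_{\emptyset})$), so the convention $J_0^{(a)}=\{0,0,\dotsc\}$ causes no trouble.
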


It is a straightforward computation from the vacancy numbers to show that
\begin{equation}
\label{eq:convexity_exact}
\inner{h_a}{\lambda} - \sum_{b \in I} A_{ab} m_i^{(b)} = -p_{i-1}^{(a)} + 2 p_i^{(a)} - p_{i+1}^{(a)}.
\end{equation}
From this, we obtain the well-known convexity properties of the vacancy numbers.

\begin{lemma}[Convexity]
\label{lemma:convexity}
If $m_i^{(a)} = 0$, then we have
\[
2 p_i^{(a)} \geq p_{i-1}^{(a)} + p_{i+1}^{(a)}.
\]
Moreover, $p_{i-1}^{(a)} \geq p_i^{(a)} \leq p_{i+1}^{(a)}$ if and only if $p_{i-1}^{(a)} = p_i^{(a)} = p_{i+1}^{(a)}$.
\end{lemma}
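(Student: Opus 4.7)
The plan is to read off both conclusions directly from the identity~\eqref{eq:convexity_exact}, which expresses the discrete second difference $-p_{i-1}^{(a)} + 2p_i^{(a)} - p_{i+1}^{(a)}$ in terms of the multiplicities $m_i^{(b)}$ and the corresponding row of the generalized Cartan matrix. Under the hypothesis $m_i^{(a)} = 0$, the $b = a$ contribution to $\sum_{b \in I} A_{ab} m_i^{(b)}$ drops out, and every remaining summand $A_{ab} m_i^{(b)}$ with $b \neq a$ is non-positive since $A_{ab} \leq 0$ by the axioms of a generalized Cartan matrix while $m_i^{(b)} \geq 0$. Hence $-\sum_b A_{ab} m_i^{(b)} \geq 0$, and \eqref{eq:convexity_exact} immediately yields $2 p_i^{(a)} \geq p_{i-1}^{(a)} + p_{i+1}^{(a)}$.

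For the ``moreover'' part, I would rewrite the left-hand side as a sum of two forward differences,
\[
2p_i^{(a)} - p_{i-1}^{(a)} - p_{i+1}^{(a)} = \bigl(p_i^{(a)} - p_{i-1}^{(a)}\bigr) + \bigl(p_i^{(a)} - p_{i+1}^{(a)}\bigr).
\]
Under the hypothesis $p_{i-1}^{(a)} \geq p_i^{(a)} \leq p_{i+1}^{(a)}$, both parenthesized differences are non-positive, so their sum is $\leq 0$. Combined with the concavity inequality just established, which gives $\geq 0$, the sum must be exactly $0$, which forces each non-positive summand to vanish separately. Thus $p_{i-1}^{(a)} = p_i^{(a)} = p_{i+1}^{(a)}$, and the reverse implication is immediate.

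I do not anticipate any genuine obstacle here: the argument is essentially a sign check on \eqref{eq:convexity_exact} combined with an elementary pinching between two inequalities. The only point that merits a quick check is the behavior at $i = 1$, where $p_0^{(a)}$ must be interpreted via the formula \eqref{eq:vacancy}; but this is consistent with the standard convention used in the paper, so no separate boundary case analysis is needed.
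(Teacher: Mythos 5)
Your proposal is correct and is exactly the argument the paper intends: it reads both claims off the second-difference identity \eqref{eq:convexity_exact}, using $A_{ab}\leq 0$ for $b\neq a$, $m_i^{(b)}\geq 0$, and the vanishing of the $b=a$ term when $m_i^{(a)}=0$ (plus $\langle h_a,\lambda\rangle\geq 0$, which is $0$ in the $\RC(\infty)$ setting), and then pinches the two non-positive differences against the resulting non-negative sum. The paper gives no further detail beyond citing \eqref{eq:convexity_exact}, so there is nothing to compare beyond this.
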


In the sequel, we will refer to this lemma simply as convexity as we will frequently use it.

\section{Star-crystal structure}
\label{sec:*crystal}

\begin{dfn}
\label{def:RC_star_crystal_ops}
Fix some $a \in I$, and let $x$ be the smallest \emph{co}rigging in $(\nu,J)^{(a)}$.
\begin{itemize}
\item[\defn{$e_a^*$}:] If $x =0$, then $e_a(\nu, J) = 0$. Otherwise let $r$ be a row in $(\nu, J)^{(a)}$ of minimal length $\ell$ with corigging $x$. Then $e_a(\nu, J)$ is the rigged configuration which removes a box from row $r$ and sets the new corigging of $r$ to be $x+1$.

\item[\defn{$f_a^*$}:] Let $r$ be a row in $(\nu, J)^{(a)}$ of maximal length $\ell$ with corigging $x$. Then $f_a(\nu, J)$ is the rigged configuration which adds a box to row $r$ and sets the new colabel of $r$ to be $x-1$.
\end{itemize}
\end{dfn}

If $e_a^*$ removes a box from a row of length $\ell$ in $(\nu, J)$, then the the vacancy numbers change by the formula
\begin{equation}
\label{eq:change_vac_e}
\widetilde{p}_i^{(b)} = \begin{cases}
p_i^{(b)} & \text{if } i \leq \ell, \\
p_i^{(b)} + A_{ab} & \text{if } i > \ell.
\end{cases}
\end{equation}
On the other hand, if $f_a^*$ adds a box to a row of length $\ell$, then the vacancy numbers change by
\begin{equation}
\label{eq:change_vac_f}
\widetilde{p}_i^{(b)} = \begin{cases}
p_i^{(b)} & \text{if } i < \ell, \\
p_i^{(b)} - A_{ab} & \text{if } i \geq \ell.
\end{cases}
\end{equation}
Similar equations hold for $e_a$ and $f_a$ respectively. So the riggings of unchanged rows are changed according to Equation~\eqref{eq:change_vac_e} and Equation~\eqref{eq:change_vac_f} under $e_a$ and $f_a$, respectively. 

\begin{remark}
By Equation~\eqref{eq:change_vac_e} and Equation~\eqref{eq:change_vac_f}, the crystal operators $e_a$ and $f_a$ preserve all colabels of $(\nu, J)$ other than the row changed in $(\nu, J)^{(a)}$.
\end{remark}

\begin{ex}
\label{ex:running}
Consider type $D_4$ with Dynkin diagram
\[
\begin{tikzpicture}[xscale=2,yscale=.75] 
\node[circle,fill,scale=.45,label={below:$1$}] (1) at (0,0) {};
\node[circle,fill,scale=.45,label={below:$2$}] (2) at (1,0) {};
\node[circle,fill,scale=.45,label={right:$3$}] (3) at (2,1) {};
\node[circle,fill,scale=.45,label={right:$4.$}] (4) at (2,-1) {};
\path[-]
 (1) edge (2)
 (2) edge (3)
 (2) edge (4);
\end{tikzpicture}
\]
Let $(\nu,J)$ be the rigged configuration
\begin{align*}
(\nu, J) 
&= f_2^*f_3^* f_1^* f_2^* f_2^* f_4^* f_3^* f_1^* f_2^* (\nu_{\emptyset}, J_{\emptyset}) \\
&=
\begin{tikzpicture}[scale=.35,anchor=top,baseline=-18]
 \rpp{2}{0}{-1}
 \begin{scope}[xshift=6cm]
 \rpp{3,1}{-2,-1}{-3,-1}
 \end{scope}
 \begin{scope}[xshift=14cm]
 \rpp{2}{0}{-1}
 \end{scope}
 \begin{scope}[xshift=20cm]
 \rpp{1}{0}{0}
 \end{scope}
\end{tikzpicture}.
\end{align*}
Then
\[
f_2^*(\nu, J) = \begin{tikzpicture}[scale=.35,anchor=top,baseline=-18]
 \rpp{2}{0}{-1}
 \begin{scope}[xshift=6cm]
 \rpp{4,1}{-3,-1}{-5,-1}
 \end{scope}
 \begin{scope}[xshift=14cm]
 \rpp{2}{0}{-1}
 \end{scope}
 \begin{scope}[xshift=20cm]
 \rpp{1}{0}{0}
 \end{scope}
\end{tikzpicture}.
\]
\end{ex}

Let $\RC(\infty)^*$ denote the closure of $(\nu_{\emptyset}, J_{\emptyset})$ under $f_a^*$ and $e_a^*$. We define the remaining crystal structure by
\begin{gather*}
\varepsilon_a^*(\nu, J) = \max \{ k \in \ZZ \mid (e_a^*)^k(\nu, J) \neq 0 \}, 
\hspace{20pt} 
\varphi_a^*(\nu, J) = \inner{h_a}{\wt(\nu,J)} + \varepsilon_a^*(\nu, J),
\\ \wt(\nu, J) = -\sum_{a \in I} \lvert \nu^{(a)} \rvert \alpha_a.
\end{gather*}

\begin{remark}
\label{remark:duality}
We will say an argument holds by duality when we can interchange:
\begin{itemize}
\item ``label'' and ``colabel'';
\item $e_a$ and $e_a^*$;
\item $f_a$ and $f_a^*$.
\end{itemize}
For an example, compare the proof of Proposition~\ref{prop:ep_phi_star} with~\cite[Thm.~3.8]{Sakamoto14}.
\end{remark}

\begin{lemma}
The tuple $(\RC(\infty)^*, e_a^*, f_a^*, \varepsilon_a^*, \varphi_a^*, \wt)$ is an abstract $U_q(\g)$-crystal.
\end{lemma}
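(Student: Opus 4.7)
The plan is to exploit the duality between the starred and un-starred crystal structures encoded in Remark~\ref{remark:duality}. First I would observe that the definition of $f_a^*$ only explicitly prescribes the new colabel of the affected row $r$; implicit (via dualizing the parenthetical clause in Definition~\ref{def:RC_crystal_ops}) is the convention that the \emph{riggings} of all unchanged rows are preserved, so that their colabels transform according to Equation~\eqref{eq:change_vac_f}. The analogous convention applies to $e_a^*$ via Equation~\eqref{eq:change_vac_e}. This step just pins down the convention that is left tacit in Definition~\ref{def:RC_star_crystal_ops}.

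Next I would establish the dual of Proposition~\ref{prop:ep_phi}: if $x$ denotes the smallest colabel in $(\nu,J)^{(a)}$, then $\varepsilon_a^*(\nu,J) = -\min(0,x)$, and hence $\varphi_a^*(\nu,J) = p_\infty^{(a)} - \min(0,x)$. One shows that $e_a^*$ can be applied precisely when $x < 0$ and that each application strictly increases the minimum colabel of $(\nu,J)^{(a)}$ by exactly one; the key input is Lemma~\ref{lemma:convexity} applied to the colabels, which controls where the new minimum is realized after removing a box.

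With this formula in hand, I would verify the remaining crystal axioms in turn. (i)~$f_a^*$ and $e_a^*$ are mutually inverse on the subsets where they are nonzero: one traces how the minimum-colabel row propagates using Equations~\eqref{eq:change_vac_e} and~\eqref{eq:change_vac_f} and notes that the maximal-length row with corigging $x$ in $(\nu,J)$ becomes the minimal-length row with corigging $x-1$ in $f_a^*(\nu,J)$. (ii)~$\wt\bigl(f_a^*(\nu,J)\bigr) = \wt(\nu,J) - \alpha_a$ and $\wt\bigl(e_a^*(\nu,J)\bigr) = \wt(\nu,J) + \alpha_a$ follow immediately from the definition of $\wt$. (iii)~$\varepsilon_a^*$ and $\varphi_a^*$ change by $\pm 1$ under the operators, which follows from (ii) combined with the formula established above. (iv)~The relation $\varphi_a^*(\nu,J) = \varepsilon_a^*(\nu,J) + \inner{h_a}{\wt(\nu,J)}$ is true by definition.

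The main (and really only) obstacle is purely bookkeeping: keeping labels versus colabels straight as the vacancy numbers shift. Once the dual convention for unchanged rows is in place, every step proceeds word-for-word as in the un-starred proofs of~\cite{SalS15, SalS15II, S06} via Remark~\ref{remark:duality}.
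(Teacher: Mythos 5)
Your proposal is correct and follows essentially the same route as the paper, whose entire proof is to invoke the duality of Remark~\ref{remark:duality} with the un-starred argument of \cite[Lemma~3.3]{SalS15}; you have simply unpacked that one-line appeal into its constituent verifications. The only organizational difference is that you fold the dual of Proposition~\ref{prop:ep_phi} into the argument, whereas the paper records it separately as Proposition~\ref{prop:ep_phi_star} immediately afterward.
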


\begin{proof}
The proof that $(\RC(\infty)^*, e_a^*, f_a^*, \varepsilon_a^*, \varphi_a^*, \wt)$ is an abstract $U_q(\g)$-crystal is dual to that $\RC(\infty)$ is an abstract $U_q(\g)$-crystal under $e_a$ and $f_a$ in~\cite[Lemma~3.3]{SalS15}.
\end{proof}

\begin{prop}
\label{prop:ep_phi_star}
Let $(\nu, J) \in \RC(\infty)$ and fix some $a \in I$. Let $x$ denote the smallest colabel in $(\nu,J)^{(a)}$. Then we have
\[
\varepsilon_a^*(\nu, J) = -\min(0, x), \hspace{40pt} \varphi_a^*(\nu, J) = p_{\infty}^{(a)} - \min(0, x).
\]
\end{prop}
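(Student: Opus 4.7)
The plan is to reduce to the formula for $\varepsilon_a^*$, since $\varphi_a^*(\nu, J) = \varepsilon_a^*(\nu, J) + \inner{h_a}{\wt(\nu, J)}$ by definition, and $\inner{h_a}{\wt(\nu, J)} = -\sum_b A_{ab}|\nu^{(b)}| = p_\infty^{(a)}$.

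For $\varepsilon_a^*$, I would argue by induction on $|\nu^{(a)}|$, in direct duality with the proof of Proposition~\ref{prop:ep_phi} in~\cite{SalS15, S06}, interchanging ``label'' with ``colabel'' and $e_a$ with $e_a^*$ throughout (cf.\ Remark~\ref{remark:duality}). Since the infinite supply of length-$0$ rows carries label $0$ and $p_0^{(a)} = 0$, the smallest colabel $x$ of $(\nu, J)^{(a)}$ always satisfies $x \leq 0$, and when $x = 0$ Definition~\ref{def:RC_star_crystal_ops} gives $e_a^*(\nu, J) = 0$, so $\varepsilon_a^*(\nu, J) = 0 = -\min(0, x)$; this disposes of both the base case and the case $x = 0$.

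For $x < 0$, let $r$ be the row of minimal length $\ell$ with colabel $x$ and set $(\nu', J') = e_a^*(\nu, J)$. The key claim is that the smallest colabel of $(\nu', J')^{(a)}$ equals $x + 1$, for then the inductive hypothesis yields $\varepsilon_a^*(\nu, J) = 1 + \varepsilon_a^*(\nu', J') = 1 - (x + 1) = -x$, as desired. To verify the claim, I would use that (by duality) $e_a^*$ preserves the labels of all unaffected rows, while Equation~\eqref{eq:change_vac_e} shifts the vacancy number $\widetilde{p}_i^{(a)}$ by $A_{aa} = 2$ for $i \geq \ell$ and leaves it unchanged for $i < \ell$. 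Consequently each unaffected row of length $< \ell$ retains its colabel (already $> x$ by the minimality of $\ell$, hence $\geq x + 1$), while each unaffected row of length $\geq \ell$ has its colabel increased by $2$ (hence $\geq x + 2$); the modified row $r$, now of length $\ell - 1$, carries colabel $x + 1$ and therefore achieves the new minimum.

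The main subtlety I expect is the boundary case $\ell = 1$, in which row $r$ becomes a length-$0$ row and is absorbed into the pool $J_0^{(a)} = \{0, 0, \dotsc\}$ with colabel $0$. To make the induction go through one must rule out, for $(\nu, J) \in \RC(\infty)$, the pathological possibility of $\ell = 1$ together with $x \leq -2$, which would otherwise cause the smallest colabel to jump from $x$ to $\min(0, x+2)$ rather than to $x+1$; this is the dual of the analogous step in the proof of Proposition~\ref{prop:ep_phi} and should be controlled by the convexity of the vacancy numbers (Lemma~\ref{lemma:convexity}) together with the recursive construction of $\RC(\infty)$ from $(\nu_\emptyset, J_\emptyset)$.
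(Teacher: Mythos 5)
Your argument is essentially the paper's own proof: the paper likewise reduces $\varphi_a^*$ to $\varepsilon_a^*$ via $p_{\infty}^{(a)} = \inner{h_a}{\wt(\nu,J)}$, disposes of $x \geq 0$ by the definition of $e_a^*$, and then inducts (on $\varepsilon_a^*(\nu,J)$ rather than $\absval{\nu^{(a)}}$, an immaterial difference), verifying exactly as you do that the minimal colabel rises by precisely $1$ by checking the modified row, the unaffected rows of length $j < \ell$ (colabel unchanged and $\geq x+1$ by minimality of $\ell$), and the unaffected rows of length $j \geq \ell$ (colabel increased by $A_{aa}=2$, hence $\geq x+2$). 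The boundary case you flag ($\ell = 1$ with $x \leq -2$, where the selected row would vanish into $J_0^{(a)}$ with colabel $0$ rather than $x+1$) is not addressed in the paper's proof either, which simply asserts $x' = x+1$ and defers to the dual argument of Sakamoto, so your proposal is no less complete than the published one on this point.
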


\begin{proof}
The following argument for $\varepsilon_a^*$ is essentially the dual to that given in~\cite[Thm.~3.8]{Sakamoto14}. We include it here as an example of Remark~\ref{remark:duality}.

It is sufficient to prove $\varepsilon_a^*(\nu, J) = -\max(0, x)$ since $p_{\infty}^{(a)} = \inner{h_a}{\wt(\nu, J)}$. If $x \geq 0 = \varepsilon_a^*(\nu, J)$, then $e_a^*(\nu, J) = 0$ by definition. Thus we proceed by induction on $\varepsilon_a^*(\nu,J)$ and assume $x < 0$. Let $(\nu',J') = e_a^*(\nu, J)$ and $y'$ denote the resulting colabel from a colabel $y$. In particular, we have $x' = x + 1$ and all other colabels follow Equation~\eqref{eq:change_vac_e}. Next, let $y$ denote the colabel of a row of length $j$. For $j  < \ell$, we have $y > x$ (equivalently $y \geq x - 1$) because we chose $\ell$ as large as possible. Thus $y' = y$, and hence $y' = y \geq x + 1 = x'$. For $j \geq \ell$, we have $y \geq x$ by the minimality of $x$ and $y' = y + 2$. Hence, $y' = y + 2 \geq x + 1 = x'$, and so $\varepsilon_a^*(\nu', J') = \varepsilon_a^*(\nu, J) - 1$ as desired.
\end{proof}

The rest of this section will amount to showing that Conditions~(\ref{item:star1})--(\ref{item:star6}) of Proposition~\ref{prop:weaker_conditions} hold.
Note that using Proposition \ref{prop:ep_phi_star} and \cite[Prop. 4.2]{SalS15}, we can rewrite Equation \eqref{eq:jump} as 
\begin{equation}
\label{eq:RCjump}
\begin{aligned}
\kappa_a(\nu,J) &= -\min(0,x_\ell) - \min(0,x_c) + \langle h_a , \wt(\nu,J) \rangle, \\
&= -\min(0,x_\ell) - \min(0,x_c) + p_\infty^{(a)},
\end{aligned}
\end{equation}
where $x_\ell$ and $x_c$ are the smallest label and colabel, respectively, in $(\nu,J)^{(a)}$.

\begin{lemma}
\label{lemma:kappa0}
Fix $(\nu, J) \in \RC(\infty)$ and $a \in I$. Assume $\kappa_a(\nu, J) = 0$. Then $f_a(\nu, J) = f_a^*(\nu, J)$.
\end{lemma}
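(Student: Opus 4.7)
The plan is to argue that, under $\kappa_a(\nu,J)=0$, the row selected by $f_a$ and the row selected by $f_a^*$ coincide, and the two modifications produce identical output. First I would use Equation~\eqref{eq:RCjump} to rewrite the hypothesis as $p_\infty^{(a)} = x_\ell + x_c$, where $x_\ell\le 0$ and $x_c\le 0$ are the minimum label and minimum colabel in $(\nu,J)^{(a)}$. For any row of length $i$ in $(\nu,J)^{(a)}$, its label is at least $x_\ell$ and its colabel is at least $x_c$, and adding these bounds gives $p_i^{(a)} \ge x_\ell + x_c = p_\infty^{(a)}$ whenever $m_i^{(a)}>0$.

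The crux is a convexity argument showing that the longest row of $(\nu,J)^{(a)}$ is where both minima are attained. Let $\ell$ denote the length of the longest row of $(\nu,J)^{(a)}$, or $\ell = 0$ if $(\nu,J)^{(a)}$ has only empty rows. Since $m_i^{(a)}=0$ for all $i>\ell$, Lemma~\ref{lemma:convexity} tells us the first differences $d_i := p_{i+1}^{(a)} - p_i^{(a)}$ are non-increasing for $i \ge \ell$. Because $p_i^{(a)} \to p_\infty^{(a)}$ as $i\to\infty$, the partial sums of the $d_i$ converge, so every $d_i \ge 0$ for $i\ge\ell$ (otherwise monotonicity would propagate a negative value and drive the partial sums to $-\infty$). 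Hence $p_i^{(a)}$ is non-decreasing on $[\ell,\infty)$ with limit $p_\infty^{(a)}$, which gives $p_i^{(a)} \le p_\infty^{(a)}$ there; combining this with the lower bound $p_\ell^{(a)}\ge p_\infty^{(a)}$ of the previous paragraph yields $p_i^{(a)} = p_\infty^{(a)}$ for all $i \ge \ell$.

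Because $p_\ell^{(a)} = x_\ell + x_c$ and the bounds label $\ge x_\ell$ and colabel $\ge x_c$ must each hold while summing to $p_\ell^{(a)}$, every length-$\ell$ row of $(\nu,J)^{(a)}$ has label exactly $x_\ell$ and colabel exactly $x_c$. Thus $f_a$ and $f_a^*$ may be taken to act on the same length-$\ell$ row $r$. Substituting $i = \ell$ into $p_i^{(a)} - p_\infty^{(a)} = -\sum_b A_{ab}\sum_{j>i}(j-i)m_j^{(b)}$ and noting that the $b=a$ summand vanishes while each remaining summand is nonnegative, the equality $p_\ell^{(a)}=p_\infty^{(a)}$ forces $\sum_{j>\ell}(j-\ell)m_j^{(b)} = 0$ for every $b$ with $A_{ab}\ne 0$; equivalently, no row of length exceeding $\ell$ lies in any colored partition adjacent to $a$.

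To finish, I would verify the outputs row by row. On the modified row, $p_{\ell+1}^{(a)} = p_\infty^{(a)}$ together with Equation~\eqref{eq:change_vac_f} shows that the new label $x_\ell-1$ assigned by $f_a$ corresponds to exactly the colabel $x_c-1$ assigned by $f_a^*$, so the two modifications match. Every other row has length at most $\ell$ in $(\nu,J)^{(a)}$ and at most $\ell$ in $(\nu,J)^{(b)}$ for each $b$ with $A_{ab}<0$, so its vacancy number is unchanged and both its label and its colabel are preserved by the two operations. This gives $f_a(\nu,J) = f_a^*(\nu,J)$. The main obstacle is the convexity step of the second paragraph, where concavity of $p^{(a)}$ past $\ell$ must be combined with the finite limit $p_\infty^{(a)}$ to pin the vacancy numbers and thereby align the two crystal operators.
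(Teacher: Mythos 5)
Your proof is correct and follows essentially the same route as the paper's: use $\kappa_a(\nu,J)=0$ to force both $f_a$ and $f_a^*$ onto a longest row of $\nu^{(a)}$, where the minimal label and minimal colabel are attained simultaneously and convexity pins $p_i^{(a)}=p_\infty^{(a)}$ for $i\ge\ell$; you additionally spell out that no $\nu^{(b)}$ adjacent to $a$ can have a row longer than $\ell$, a check needed for the unchanged rows to agree that the paper leaves implicit. One cosmetic slip: the identity should read $p_i^{(a)}-p_\infty^{(a)}=\sum_b A_{ab}\sum_{j>i}(j-i)m_j^{(b)}$ (no leading minus sign), but since each $b\neq a$ summand is then nonpositive rather than nonnegative, the term-by-term vanishing you conclude from $p_\ell^{(a)}=p_\infty^{(a)}$ is unaffected.
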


\begin{proof}
Suppose that $f_a$ adds a box to a row of length $i$ with rigging $x$. Recall that $x = -\varepsilon_a(\nu, J)$. Suppose the longest row of $\nu^{(a)}$ has length $\ell > i$ and let $x_\ell$ denote any rigging of the longest row. Therefore, we have $x_\ell > x$ by the definition of $f_a$, and we have $p_\ell^{(a)} \leq p_{\infty}^{(a)}$ by convexity. Thus from the definition of $\varepsilon_a^*$, we have
\begin{equation}
\label{eq:contradiction_kappa0}
\varepsilon_a^*(\nu, J) \geq x_\ell - p_\ell^{(a)} > x - p_{\infty}^{(a)} = -\varepsilon_a(\nu, J) - p_{\infty}^{(a)}.
\end{equation}
This implies $\varepsilon_a^*(\nu, J) + \varepsilon_a(\nu, J) + p_{\infty}^{(a)} > 0$, which is a contradiction. Therefore $f_a$ must add a box to one of the longest rows of $\nu^{(a)}$. Moreover, if $p_\ell^{(a)} < p_{\infty}^{(a)}$, then Equation~\eqref{eq:contradiction_kappa0} would still hold and result in a contradiction. Similar statements holds for $f_a^*$ by duality.

Therefore $f_a$ and $f_a^*$ act on the longest row of $\nu^{(a)}$ and $p_i^{(a)} = p_{i+1}^{(a)} = p_{\infty}^{(a)}$. Let $x$ and $x^*$ denote the label of the row on which $f_a$ and $f_a^*$ act, respectively. Both of these labels decrease by $1$ after applying $f_a$ and $f_a^*$, respectively, by Equation~\eqref{eq:colabel_change} and Equation~\eqref{eq:label_change}, respectively. So it is sufficient to show $x = x^*$. Note that $x \leq x^*$ as the smallest colabel is the one with the largest rigging. Suppose $x < x^*$, then we have
\[
\varepsilon_a^*(\nu, J) \geq x^* - p_i^{(a)} > x - p_{\infty}^{(a)} \geq -\varepsilon_a(\nu, J) - p_{\infty}^{(a)},
\]
which is a contradiction. Therefore we have $f_a = f_a^*$.
\end{proof}

\begin{ex}
\label{ex:kappa0}
Let $(\nu,J)$ be the rigged configuration of type $D_4$ from Example \ref{ex:running}.  Then $\kappa_2(\nu,J) = 0$, and 
and 
\[
f_2(\nu,J) = 
\begin{tikzpicture}[scale=.35,anchor=top,baseline=-18]
 \rpp{2}{0}{-1}
 \begin{scope}[xshift=6cm]
 \rpp{4,1}{-3,-1}{-5,-1}
 \end{scope}
 \begin{scope}[xshift=14cm]
 \rpp{2}{0}{-1}
 \end{scope}
 \begin{scope}[xshift=20cm]
 \rpp{1}{0}{0}
 \end{scope}
\end{tikzpicture}.
\]
One can check that this agrees with $f_2^*(\nu,J)$ from Example \ref{ex:running}.
\end{ex}

\begin{lemma}
\label{lemma:kappa1}
Fix $(\nu, J) \in \RC(\infty)$ and $a \in I$. Assume $\kappa_a(\nu, J) \geq 1$. Then
\[
\varepsilon_a^*\bigl(f_a(\nu, J)\bigr) = \varepsilon_a^*(\nu, J),
\hspace{40pt}
\varepsilon_a(f_a^*(\nu,J)) = \varepsilon_a(\nu,J).
\]
\end{lemma}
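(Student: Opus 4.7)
By Remark~\ref{remark:duality}, the two equations are exchanged under the swap label $\leftrightarrow$ colabel, $f_a \leftrightarrow f_a^*$, so it suffices to establish $\varepsilon_a^*(f_a(\nu,J)) = \varepsilon_a^*(\nu,J)$. Set $(\nu',J') := f_a(\nu,J)$, and let $x_\ell$ and $x_c$ denote the smallest label and smallest colabel of $(\nu,J)^{(a)}$, respectively. By definition, $f_a$ adds a box to a longest row of rigging $x_\ell$ in $(\nu,J)^{(a)}$; call its length $i$. The resulting row has length $i+1$ and rigging $x_\ell-1$, so by Equation~\eqref{eq:change_vac_f} together with $A_{aa}=2$, its new colabel is
\[
\widetilde{p}_{i+1}^{(a)} - (x_\ell - 1) = p_{i+1}^{(a)} - x_\ell - 1.
\]
By the remark following Equation~\eqref{eq:change_vac_f}, every other colabel in $(\nu',J')^{(a)}$ equals the corresponding colabel in $(\nu,J)^{(a)}$.

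In view of Proposition~\ref{prop:ep_phi_star}, the goal reduces to showing that the smallest colabel $x_c'$ of $(\nu',J')^{(a)}$ satisfies $\min(0,x_c') = \min(0,x_c)$. Since only the modified row's colabel has changed, this amounts to comparing $p_{i+1}^{(a)} - x_\ell - 1$ with $x_c$. Using Equation~\eqref{eq:RCjump}, the hypothesis $\kappa_a(\nu,J) \geq 1$ rewrites as
\[
p_\infty^{(a)} \geq \min(0,x_\ell) + \min(0,x_c) + 1,
\]
and Lemma~\ref{lemma:convexity} controls how $p_{i+1}^{(a)}$ compares with $p_\infty^{(a)}$. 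The plan is to combine these two inputs with the fact that any row of length strictly greater than $i$ has rigging strictly greater than $x_\ell$ by the maximality built into $f_a$, hence colabel strictly less than $p_j^{(a)} - x_\ell$ for its length $j$. A short case analysis on the signs of $x_\ell$ and $x_c$ (and on whether the row realizing $x_c$ is shorter than, equal to, or longer than $i$) then yields the required bound on the modified row's new colabel.

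The main obstacle will be the subcase in which $x_c$ is itself realized on the row that $f_a$ modifies: both sides of the needed inequality then depend on the same local values $p_i^{(a)}$ and $p_{i+1}^{(a)}$, and convexity must be sharp enough to absorb the $-1$ shift coming from $f_a$. This is precisely where the strict inequality $\kappa_a \geq 1$ is used (as opposed to $\kappa_a = 0$, which forced equalities in the proof of Lemma~\ref{lemma:kappa0}): the extra unit in $p_\infty^{(a)}$ provides exactly the slack needed to conclude $p_{i+1}^{(a)} - x_\ell - 1 \geq x_c$ in this borderline situation, and hence that $\min(0,x_c')=\min(0,x_c)$.
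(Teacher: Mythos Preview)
Your proposal is a sketch rather than a proof, and the specific claim you make in the final paragraph is incorrect. You assert that when the row modified by $f_a$ is itself the row realizing the minimal colabel $x_c$, the hypothesis $\kappa_a\geq 1$ yields
\[
p_{i+1}^{(a)} - x_\ell - 1 \;\geq\; x_c,
\]
i.e., that the new colabel of the modified row is at least the old minimum. This fails. If $i$ is the length of the longest row of $\nu^{(a)}$, convexity gives $p_i^{(a)}=p_{i+1}^{(a)}=p_\infty^{(a)}$, and then the new colabel equals $x_c-1<x_c$, regardless of the value of $\kappa_a$. What actually happens in this subcase (and this is where $\kappa_a\geq 1$ is used) is that one is forced into $x_c\geq 1$, so both the old and new minimum colabels are nonnegative and $\varepsilon_a^*$ is $0$ before and after. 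That is a different mechanism from the inequality you state.

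There is a second gap even in the direction where your inequality does hold. Suppose $p_{i+1}^{(a)}\geq p_i^{(a)}+2$, so the new colabel is strictly larger than $x_c$. If the modified row was the \emph{unique} row realizing $x_c$ and $x_c<0$, then after applying $f_a$ the minimum colabel strictly increases, which would change $\varepsilon_a^*$. Your outline does not address this; the paper rules it out by a separate structural argument (using Equation~\eqref{eq:convexity_exact} with $m_i^{(a)}=1$ to force $p_{i-1}^{(a)}\leq p_i^{(a)}$, and then deriving a contradiction whether or not there is a shorter row). Finally, the case where the minimum colabel is realized on a \emph{different} row also requires work: one must exclude the possibility that the new colabel of the modified row dips strictly below $x_c$, and the paper does this via a convexity argument on rows of length $>i$ that your sketch only gestures at. In short, the setup and the reduction to $\min(0,x_c')=\min(0,x_c)$ are correct, but the promised ``short case analysis'' is where all the content lies, and the one concrete step you supply is the wrong inequality.
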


\begin{proof}
Let $x^c$ denote the smallest colabel of $(\nu,J)^{(a)}$.
Let $x$ and $i$ denote the rigging and length of the row on which $f_a$ acts.  By the minimality of $x^c$, we have $x_c := p_i^{(a)} - x \geq x^c$. Note that the colabel of the row after the application of $f_a$ becomes
\begin{equation}
\label{eq:new_colabel}
\widetilde{x}_c := p_{i+1}^{(a)} - 2 - (x - 1) = p_{i+1}^{(a)} - x - 1,
\end{equation}
which implies that
\begin{equation}
\label{eq:colabel_change}
\widetilde{x}_c = x_c + p_{i+1}^{(a)} - p_i^{(a)} - 1.
\end{equation}

The remainder of the proof will be split into two cases: $x^c = p_i^{(a)} - x$ and $x^c < p_i^{(a)} - x$.

\case{$x^c = p_i^{(a)} - x$}

First, consider the case $p_{i+1}^{(a)} \leq p_i^{(a)}$. We also assume there exists a row of length $\ell > i$ of $\nu^{(a)}$, and let $x_\ell$ denote the rigging of that row. Thus $x < x_\ell$ and $x^c \leq p_\ell^{(a)} - x_\ell$ by the definition of $f_a$ and the minimality of $x^c$. Hence 
\[
p_i^{(a)} - x = x^c \leq p_\ell^{(a)} - x_\ell < p_\ell^{(a)} - x,
\]
which is equivalent to $p_i^{(a)} < p_\ell^{(a)}$. It must be the case, then, that $p_{i+1}^{(a)} > p_i^{(a)}$ by convexity, which is impossible. Thus the longest row of $\nu^{(a)}$ must be of length $i$. Convexity implies $p_i^{(a)} = p_{i+1}^{(a)} = p_\infty^{(a)}$, which results in
\begin{align*}
\kappa_a(\nu, J) & = p_{\infty}^{(a)} - \min(x, 0) - \min(x^c, 0)
\\ & = p_i^{(a)} - \min(x, 0) - \min\bigl(p_i^{(a)} - x, 0\bigr).
\end{align*}
Since $x$ was the rigging chosen by $f_a$, we must have $x \leq 0$. Additionally, if $p_i^{(a)} - x = x^c \leq 0$, we have
\[
1 \leq \kappa_a(\nu, J) = p_i^{(a)} - x - (p_i^{(a)} - x) = 0,
\]
which is a contradiction. Thus $x^c \geq 1$, which implies $\varepsilon_a^*(\nu, J) = 0 = \varepsilon_a^*\bigl(f_a(\nu, J))$ since $\widetilde{x}_c \geq 0$.

Next if $p_{i+1}^{(a)} = p_i^{(a)} + 1$, then $\varepsilon_a^*\bigl(f_a(\nu, J)\bigr) = \varepsilon_a^*(\nu, J)$ since $\widetilde{x}_c = x^c$, all other coriggings are fixed, and $\widetilde{x}_c = x_c$ by Equation~\eqref{eq:colabel_change} .

So we now assume $p_{i+1}^{(a)} \geq p_i^{(a)} + 2$, which implies $\widetilde{x}_c > x^c$. If there is another row with a corigging of $x^c$ or $x^c \geq 0$, then $\varepsilon_a^*\bigl(f_a(\nu, J)\bigr) = \varepsilon_a^*(\nu, J)$. So assume $f_a$ acts on the only row with a corigging of $x^c < 0$. Note that $p_i^{(a)} - x = x^c < 0$ and $x \leq 0$ implies $p_i^{(a)} < 0$.

We have $m_i^{(a)} = 1$ as, otherwise, we would either have a second corigging of $x^c$ or a smaller corigging from the minimality of $x$. Thus, by Equation~\eqref{eq:convexity_exact}, 
\begin{align*}
-2 - \sum_{b \neq a} A_{ab} m_i^{(b)} &= -p_{i-1}^{(a)} + 2p_i^{(a)} - p_{i+1}^{(a)} \\
&\leq -p_{i-1}^{(a)} + 2p_i^{(a)} - p_i^{(a)} - 2 \\
&= -p_{i-1}^{(a)} + p_i^{(a)} - 2.
\end{align*}
Since $m_i^{(b)} \geq 0$ and $-A_{ab} \geq 0$ for all $a \neq b$, we then have
\[
0 \leq -p_{i-1}^{(a)} + p_i^{(a)},
\]
or, equivalently, $p_{i-1}^{(a)} \leq p_i^{(a)}$. If $i$ is the length of the smallest row, then $0 \leq p_{i-1}^{(a)} \leq p_i^{(a)} < 0$ by convexity, which is a contradiction. Thus let $x_\ell$ denote rigging of the longest row in $\nu^{(a)}$ such that $\ell < i$, and by convexity, we have $p_\ell^{(a)} \leq p_i^{(a)}$. By the definition of $f_a$, we have $x_\ell \geq x$. Thus, we have $p_\ell^{(a)} - x_\ell \leq p_i^{(a)} - x$. However, by the unique minimality of $x^c$, we have $p_\ell^{(a)} - x_\ell > x^c = p_i^{(a)} - x$. This is a contradiction. Therefore $\varepsilon_a^*\bigl(f_a(\nu, J)\bigr) = \varepsilon_a^*(\nu, J)$.

\case{$x^c < p_i^{(a)} - x$}

Assume $\varepsilon_a^*\bigl(f_a(\nu,J)\bigr) \neq \varepsilon_a^*(\nu, J)$.  Then
\begin{equation}
\label{eq:difference_epsilon_change}
p_{i+1}^{(a)} - p_i^{(a)} - 1 < x^c+ x - p_i^{(a)} < 0,
\end{equation}
as, otherwise, the new corigging is not smaller than the minimal corigging (i.e., $\widetilde{x}_c < x^c$), which occurs on a different row and does not change under $f_a$. We rewrite Equation~\eqref{eq:difference_epsilon_change} as
\begin{equation}
\label{eq:rewrite_less_than}
p_{i+1}^{(a)} - 1 < x^c + x < p_i^{(a)}.
\end{equation}
Suppose there exists a row of length $\ell > i$ in $\nu^{(a)}$.  Then $x_\ell > x$ and $x^c \leq p_\ell^{(a)} - x_\ell$. Therefore,
\begin{equation}
\label{eq:double_inequality}
p_{i+1}^{(a)} - \ell < p_\ell^{(a)} - x_\ell + x < p_\ell^{(a)},
\end{equation}
which implies $p_{i+1}^{(a)} \leq p_j^{(a)}$ for all $\ell \geq j > i$ by convexity. Note that Equation~\eqref{eq:double_inequality} implies that $\ell > i+1$ since, otherwise, we would have $p_{i+1}^{(a)} < p_{i+1}^{(a)}$. We also have $p_{i+1}^{(a)} \leq p_j^{(a)}$ for all $j > i$ if there does not exist a row of length $\ell > i$. Since there does not exist a row of length $i+1$, we must have $p_i^{(a)} \leq p_{i+1}^{(a)} \leq p_{i+2}^{(a)}$ by convexity. Yet, Equation~\eqref{eq:rewrite_less_than} implies
\[
p_{i+1}^{(a)} < p_i^{(a)} \leq p_{i+1}^{(a)},
\]
but this is a contradiction. Therefore $\varepsilon_a^*\bigl(f_a(\nu, J)\bigr) = \varepsilon_a^*(\nu, J)$.
\end{proof}

\begin{ex}
\label{ex:kappa1}
%
%
%
%
%
%
%
%
%
%
%
%
%
%
%
Again, let $(\nu,J)$ be the rigged configuration of type $D_4$ from Example \ref{ex:running}.  Then $\varepsilon_3(\nu,J) = 0$, $\varepsilon_3^*(\nu,J) = 1$, and $\kappa_3(\nu,J) = 1$.  We have
\begin{align*}
f_3(\nu,J) &=
\begin{tikzpicture}[scale=.35,anchor=top,baseline=-18]
 \rpp{2}{0}{-1}
 \begin{scope}[xshift=6cm]
 \rpp{3,1}{-1,-1}{-1,-1}
 \end{scope}
 \begin{scope}[xshift=14cm]
 \rpp{3}{-1}{-2}
 \end{scope}
 \begin{scope}[xshift=20cm]
 \rpp{1}{0}{0}
 \end{scope}
\end{tikzpicture}\\
f_3^*(\nu,J) &= 
\begin{tikzpicture}[scale=.35,anchor=top,baseline=-18]
 \rpp{2}{0}{-1}
 \begin{scope}[xshift=6cm]
 \rpp{3,1}{-2,-1}{-2,-1}
 \end{scope}
 \begin{scope}[xshift=14cm]
 \rpp{3}{0}{-2}
 \end{scope}
 \begin{scope}[xshift=20cm]
 \rpp{1}{0}{0}
 \end{scope}
\end{tikzpicture}.
\end{align*}
Then $\varepsilon_3^*\bigl(f_3(\nu,J)\bigr) = 1$ and $\varepsilon_3\bigl(f_3^*(\nu,J)\bigr) = 0$.
\end{ex}

\begin{lemma}
\label{lemma:kappa2}
Fix $(\nu, J) \in \RC(\infty)$ and $a \in I$. Assume $\kappa_a(\nu, J) \geq 2$. Then
\[
f_af_a^*(\nu, J) = f_a^* f_a(\nu, J).
\]
\end{lemma}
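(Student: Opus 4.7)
The plan is to reduce $f_a f_a^*(\nu, J) = f_a^* f_a(\nu, J)$ to a careful identification of which rows of $\nu^{(a)}$ each operator acts on, using the convexity of the vacancy numbers (Lemma~\ref{lemma:convexity}) and the slack afforded by $\kappa_a \geq 2$. Let $r$ (of length $\ell$, rigging $x$) be the row that $f_a$ selects and $r^*$ (of length $\ell^*$, corigging $y^*$) the row that $f_a^*$ selects, and split into the cases $r \neq r^*$ and $r = r^*$.

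In the main case $r \neq r^*$, the remark following Definition~\ref{def:RC_crystal_ops} gives that $f_a$ preserves every colabel other than that of $r$, and by the duality of Remark~\ref{remark:duality}, $f_a^*$ preserves every label other than that of $r^*$. The crux of the argument is to check that the row on which $f_a^*$ acts in $f_a(\nu, J)$ is still $r^*$, and dually that $f_a$ still selects $r$ in $f_a^*(\nu, J)$. Lemma~\ref{lemma:kappa1} already yields $\varepsilon_a^*(f_a(\nu, J)) = \varepsilon_a^*(\nu, J)$ and $\varepsilon_a(f_a^*(\nu, J)) = \varepsilon_a(\nu, J)$, so the minimum colabel and label values are preserved. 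I would then use convexity together with the stronger bound $\kappa_a \geq 2$ to show that the new colabel of $r$ in $f_a(\nu, J)$, namely $p_{\ell+1}^{(a)} - x - 1$, does not occur on a row of length greater than $\ell^*$, so that $r^*$ remains the row on which $f_a^*$ acts. Once this is established, both compositions act on the same pair of rows, and a direct verification—using that the remaining riggings are pinned by preservation of coriggings under $f_a$ and of labels under $f_a^*$—shows that the two outputs agree.

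In the case $r = r^*$, both operators add a box to the same row $r$, but the new riggings produced by $f_a$ and $f_a^*$ differ by $p_{\ell+1}^{(a)} - p_\ell^{(a)}$. Here $\kappa_a \geq 2$ translates, via Propositions~\ref{prop:ep_phi} and~\ref{prop:ep_phi_star}, into explicit constraints on $p_\infty^{(a)} - p_\ell^{(a)}$; combined with convexity for $i \geq \ell$, this forces the subsequent operator to act on a \emph{new} row of length $\ell + 1$ (rather than on the row just created), and a symmetric computation confirms that $f_a f_a^*(\nu, J)$ and $f_a^* f_a(\nu, J)$ end up with the same two rows of length $\ell + 1$ carrying matching label/corigging data.

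The chief obstacle is anticipated to be Case 1: tracking the updated colabel of $r$ in $f_a(\nu, J)$ and verifying that it cannot undercut $y^*$ at a longer length than $\ell^*$, under every sign pattern for $x$ and $y^*$. This argument should be in the spirit of—and use the same convexity estimates as—the proof of Lemma~\ref{lemma:kappa1}, with the strictly stronger bound $\kappa_a \geq 2$ providing the extra room needed to close the inequalities.
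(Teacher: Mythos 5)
Your decomposition into the cases $r \neq r^*$ and $r = r^*$, and your strategy for the first case, do match the paper's proof in outline, but there are two genuine problems. The more serious one is that your analysis of $r = r^*$ asserts a false conclusion: you claim $\kappa_a \geq 2$ ``forces the subsequent operator to act on a new row of length $\ell+1$,'' so that the output has two rows of length $\ell+1$. When $p_{\ell+1}^{(a)} \leq p_{\ell}^{(a)} + 1$, the new rigging $x + p_{\ell+1}^{(a)} - p_{\ell}^{(a)} - 1 \leq x$ and the new corigging $p_{\ell+1}^{(a)} - x - 1 \leq p_{\ell}^{(a)} - x$ are still minimal, so the second operator adds its box to the \emph{same} row again, producing a single row of length $\ell+2$; this is in fact the easy subcase where commutativity holds. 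In the opposite regime $p_{\ell+1}^{(a)} \geq p_{\ell}^{(a)} + 2$ the second operator does move off the modified row, but when $m_{\ell}^{(a)} = 1$ it moves to a row of length \emph{strictly less} than $\ell$ (possibly a row of length $0$), not to a row of length $\ell$; locating that row requires Equation~\eqref{eq:convexity_exact} to deduce $p_{\ell-1}^{(a)} \leq p_{\ell}^{(a)}$ and then an argument that the longest shorter row carries rigging $x$ and corigging $p_{\ell}^{(a)} - x$, so that both orders of application agree on it. The dichotomy on $p_{\ell+1}^{(a)} - p_{\ell}^{(a)}$ is the organizing idea your sketch is missing, and without it the case $r = r^*$ does not go through as described.

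In the case $r \neq r^*$ you have stated the correct goal (the updated colabel of $r$ must not undercut the minimal corigging on a row longer than $r^*$, and dually) but not proved it, and this is where essentially all of the work lies: the paper needs separate sub-arguments according to whether $i^* > i$, $i^* = i$, or $i^* < i$, and whether a row of length $i+1$ exists. Two further cautions. First, the ``dual'' direction---ruling out that $f_a$ jumps to the modified row $r^*$ inside $f_a^*(\nu,J)$---is not obtained by symmetry alone; in the paper it is a separate case whose contradiction ($p_i^{(a)} - p_{i+1}^{(a)} \leq -2$) is reached by combining both halves of Lemma~\ref{lemma:kappa1} \emph{with the conclusion of the first case}, so the two cases cannot be treated independently. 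Second, Lemma~\ref{lemma:kappa1} preserves $\varepsilon_a^*(\nu,J) = -\min(0, x_c)$, which pins down the minimal colabel only when that colabel is negative; when it is nonnegative you must still argue directly about which row attains the minimum. So the plan is the right one, but the inequalities that close it are the content of the proof and are absent from the proposal.
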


\begin{proof}
Suppose $f_a$ (resp., $f_a^*$) acts on row $r$ of length $i$ (resp., row $r^*$ of length $i^*$) with rigging $x$ (resp., $x^*$). Without loss of generality, let $r$ (resp., $r^*$) be the northernmost such row in the diagram of $\nu^{(a)}$. Let $x_c^* = p_{i^*}^{(a)} - x^*$ and $x_c = p_i^{(a)} - x$. Note that $x \leq x^*$ and $x_c^* \leq x_c$. Applying $f_a^*$, the new rigging (and the only changed rigging) is
\begin{equation}
\label{eq:label_change}
\widetilde{x}^* = x^* + p_{i^*+1}^{(a)} - p_{i^*}^{(a)} - 1.
\end{equation}
Recall that Equation~\eqref{eq:colabel_change} gives the new corigging (and only changed corigging)
\[
\widetilde{x}_c = x_c + p_{i+1}^{(a)} - p_i^{(a)} - 1
\]
after applying $f_a$. We split the proof into three cases: the first two are cases in which $r \neq r^*$ and the last is when $r = r^*$.

\case{$f_a$ acts on row $r \neq r^*$ in $f_a^*(\nu, J)$}

Suppose $f_a f_a^*(\nu, J) \neq f_a^* f_a(\nu, J)$.  This is equivalent to $f_a^*$ acting on row $r' \neq r^*$ in $f_a(\nu, J)$. Note that we must have $r' = r$, since $f_a$ preserves all other colabels. From Equation~\eqref{eq:colabel_change}, we must have $p_{i+1}^{(a)} < p_i^{(a)} + 2$, as otherwise $\widetilde{x}_c > x_c \geq x_c^*$, which would imply $r = r' = r^*$ and be a contradiction. Next, consider when $p_{i+1}^{(a)} = p_i^{(a)} + 1$. Thus we have $\widetilde{x}_c = x_c$. Since $r' \neq r^*$, we must have $i = i^*$ and $x_c^* = \widetilde{x}_c = x_c$. However, this contradicts the assumption $r \neq r^*$ as we have $x = x^*$.

Hence $p_{i+1}^{(a)} \leq p_i^{(a)}$. Suppose $i$ is the length of the longest row of $\nu^{(a)}$.  Then $p_i^{(a)} = p_{i+1}^{(a)} = p_{\infty}^{(a)}$ by convexity. Moreover, we have $\widetilde{x}_c = x_c - 1 = x_c^*$ since $r, r' \neq r^*$. Note that since $f_a$ (resp., $f_a^*$) acts on $r$ (resp., $r^*$), we must have $x \leq 0$ (resp., $x_c^* \leq 0$). Therefore, we have
\begin{align*}
2 \leq \kappa_a(\nu, J) & = p_i^{(a)} - \min(x, 0) - \min(x_c^*, 0)
\\ & = p_i^{(a)} - x - x_c^*
\\ & = p_i^{(a)} - x - (p_i^{(a)} - x - 1) = 1,
\end{align*}
which is a contradiction.

Suppose there exists a row $r_\ell$ of length $\ell > i$ in $\nu^{(a)}$. Let $x_\ell$ denote the rigging of $r_\ell$, and note $x < x_\ell$ by our assumption. Therefore, we have
\[
p_{i+1}^{(a)} - x - 1  = \widetilde{x}_c \leq x_c^* \leq p_\ell^{(a)} - x_l < p_\ell^{(a)} - x,
\]
which implies $p_{i+1}^{(a)} \leq p_\ell^{(a)}$. Assume there exists a row of length $i+1$ in $\nu^{(a)}$ with rigging $x_{i+1}$.  It follows that
\[
p_{i+1}^{(a)} - x > p_{i+1}^{(a)} - x_{i+1} \geq x_c^* = p_{i^*}^{(a)} - x^*,
\]
which is equivalent to
\[
p_{i+1}^{(a)} - p_{i^*}^{(a)} > x - x^*.
\]
Furthermore,
\[
p_{i+1}^{(a)} - x - 1 = \widetilde{x}_c \leq x_c^* = p_{i^*}^{(a)} - x^*,
\]
which results in
\begin{equation}
\label{eq:changing_f_star_inequality}
x - x^* \geq p_{i+1}^{(a)} - p_{i^*}^{(a)} - 1.
\end{equation}
Additionally, Equation~\eqref{eq:changing_f_star_inequality} is necessarily a strict inequality if $i^* > i$ because it must be the case that $\widetilde{x}_c < x_c^*$.
Hence
\[
p_{i+1}^{(a)} - p_{i^*}^{(a)} > x - x^* \geq p_{i+1}^{(a)} - p_{i^*}^{(a)} - 1,
\]
which is a contradiction for $i^* > i$ as the right inequality becomes a strict inequality.

Next, note that $\widetilde{x}^* = x^* + p_{i^*+1}^{(a)} - p_{i^*}^{(a)} - 1 \geq x$ since $f_a$ acts on $r$. Hence 
\begin{equation}
\label{eq:unchanged_f_inequality}
p_{i^*+1}^{(a)} - p_{i^*}^{(a)} - 1 \geq x - x^*,
\end{equation}
which is a strict inequality for $i \leq i^*$. Thus 
\[
p_{i^*+1}^{(a)} - p_{i^*}^{(a)} - 1 \geq x - x^* \geq p_{i+1}^{(a)} - p_{i^*}^{(a)} - 1,
\]
or, equivalently, $p_{i^*+1}^{(a)} \geq p_{i+1}^{(a)}$. Since $i \leq i^*$, we have $p_{i^*+1}^{(a)} > p_{i+1}^{(a)}$, and hence $i = i^*$ cannot occur.

Now suppose $i^* < i$. Therefore $x_c < p_{i+1}^{(a)} - x_{i+1}$, which implies
\[
p_{i+1}^{(a)} - x - 1  = \widetilde{x}_c \leq x_c^* < p_{i+1}^{(a)} - x_{i+1} < p_{i+1}^{(a)} - x.
\]
So $p_{i+1}^{(a)} < p_{i+1}^{(a)}$, which is a contradiction.

Finally, if there does not exist a row of length $i+1$, then $p_i^{(a)} = p_{i+1}^{(a)} = p_\ell^{(a)}$ by convexity, so the argument given above will still yield a contradiction. Hence, $f_a f_a^*(\nu,J) = f_a^* f_a(\nu, J)$.

\case{$f_a$ acts on row $r' \neq r$ in $f_a^*(\nu, J)$}

Note that $r' = r^*$, where $r^* \neq r$, as $f_a^*$ fixes all other riggings. So from Lemma~\ref{lemma:kappa1}, we have
\[
\widetilde{x}^* = -\varepsilon_a\bigl(f_a^*(\nu, J)\bigr) = -\varepsilon_a(\nu, J) = x,
\]
and hence $i \leq i^*$. Therefore $x < x^*$ as $x = x^*$ implies $r = r^*$. Thus,
\[
x  = \widetilde{x}^* = x^* + p_{i^*+1}^{(a)} - p_{i^*}^{(a)} - 1 > x + p_{i^*+1}^{(a)} - p_{i^*}^{(a)} - 1,
\]
and hence $p_{i^*+1}^{(a)} \leq p_{i^*}^{(a)}$. Dually, $f_a^*$ acts on row $r$ in $f_a(\nu, J)$ since this would contradict $f_a$ acting on $r^* \neq r$ from the previous case. Similarly, 
\[
\widetilde{x}_c = -\varepsilon_a^*\bigl(f_a^*(\nu, J)\bigr) = -\varepsilon_a^*(\nu, J) = x_c^*
\]
by the dual version of Lemma~\ref{lemma:kappa1}, implying $i^* \leq i$. Hence, $i = i^*$ and
\[
p_i^{(a)} - x^* = x_c^* = \widetilde{x}_c = x_c + p_{i+1}^{(a)} - p_i^{(a)} - 1 = - x +  p_{i+1}^{(a)} - 1,
\]
which yields $x - x^* = p_{i+1}^{(a)} - p_i^{(a)} - 1$. Thus
\begin{align*}
x & = \widetilde{x}^* = x^* + p_{i+1}^{(a)} - p_i^{(a)} - 1
\\ p_i^{(a)} - x^* = x_c^* & = \widetilde{x}_c = x_c + p_{i+1}^{(a)} - p_i^{(a)} - 1 = p_{i+1}^{(a)} - x - 1 \leq p_i^{(a)} - x - 1,
\end{align*}
which implies $x - x^* \leq -1$. Hence,
\[
p_i^{(a)} - p_{i+1}^{(a)} = x^* - x - 1 \leq -2,
\]
and this contradicts $0 \leq p_i^{(a)} - p_{i+1}^{(a)}$. 


\case{$r = r^*$}

From $x = x^*$ and $i = i^*$, we have 
\begin{align*}
\widetilde{x}^* & = x^* + p_{i+1}^{(a)} - p_i^{(a)} - 1 = x + p_{i+1}^{(a)} - p_i^{(a)} - 1,
\\ \widetilde{x}_c & = x_c + p_{i+1}^{(a)} - p_i^{(a)} - 1 = p_i^{(a)} - x  + p_{i+1}^{(a)} - p_i^{(a)} - 1 = p_{i+1}^{(a)} - x - 1,
\\ x_c^* & = x_c = p_i^{(a)} - x.
\end{align*}

If $p_{i+1}^{(a)} \leq p_i^{(a)} + 1$, then $\widetilde{x}^* \leq x$ and $\widetilde{x}_c \leq x_c^*$. Hence, $f_a$ and $f_a^*$ select row $r$ in $f_a^*(\nu, J)$ and $f_a(\nu, J)$, respectively, and so we have $f_a f_a^*(\nu, J) = f_a^* f_a(\nu, J)$. Next, consider the case when $p_{i+1}^{(a)} \geq p_i^{(a)} + 2$.  Then $\widetilde{x}^* > x$ and $\widetilde{x}_c > x_c^*$. If $m_i^{(a)} \geq 2$, then there exists a row $r' \neq r$ such that $x^* = x$ and $x_c^* = x_c$. So $f_a$ and $f_a^*$ select row $r'$ in $f_a^*(\nu, J)$ and $f_a(\nu, J)$, respectively, and thus we have $f_a f_a^*(\nu, J) = f_a^* f_a(\nu, J)$.
If $m_i^{(a)} = 1$, then, as in Lemma~\ref{lemma:kappa1}, we have
\[
-2 - \sum_{b \neq a} A_{ab} m_i^{(b)} = -p_{i-1}^{(a)} + 2p_i^{(a)} - p_{i+1}^{(a)} \leq -p_{i-1}^{(a)} + p_i^{(a)} - 2.
\]
from Equation~\eqref{eq:convexity_exact}.  This implies $p_{i-1}^{(a)} \leq p_i^{(a)}$.  We consider the case when $r$ is the smallest row of $\nu^{(a)}$, which implies $r$ is the unique row with rigging $x$ and corigging $x_c^*$. Moreover, we have $0 \leq p_{i-1}^{(a)} \leq p_i^{(a)}$ by convexity. Because we are acting on $r$ by $f_a$ and $f_a^*$, we have $x \leq 0$ and $x_c^* \leq 0$. Hence,
\[
0 \leq p_i^{(a)} \leq p_i^{(a)} - x = x_c^* \leq 0 \implies 
0 = p_i^{(a)} = x = x_c^*. 
\]
Therefore $f_a$ (resp., $f_a^*$) acts on a row of length $0$ in $f_a^*(\nu, J)$ (resp., $f_a(\nu, J)$) as all other riggings (resp., coriggings) are positive. Moreover, the resulting rigging is $-1$ in both cases, and so $f_a f_a^*(\nu, J) = f_a^* f_a(\nu, J)$.

Now assume there exists a row $r_\ell$ of length $\ell < i$ in $\nu^{(a)}$, and without loss of generality, suppose $\ell$ is maximal. Let $x_\ell$ denote the rigging of $r_\ell$, and by the definition of $f_a$ and $f_a^*$, we have $x_\ell \geq x$ and $p_\ell^{(a)} - x_\ell \geq x_c^* = p_i^{(a)} - x$. By convexity, we have $p_\ell^{(a)} \leq p_{i-1}^{(a)} \leq p_i^{(a)}$. Therefore, we have
\[
p_i^{(a)} - x \leq p_\ell^{(a)} - x_\ell \leq p_i^{(a)} - x,
\]
and so $p_i^{(a)} - x = p_\ell^{(a)} - x_\ell$. Moreover, if $p_\ell^{(a)} < p_i^{(a)}$, then we have $x_\ell < x$, which cannot occur, and hence we also have $x = x_\ell$. Therefore $f_a$ and $f_a^*$ acts on $r_\ell$ in $f_a^*(\nu, J)$ and $f_a(\nu, J)$, respectively. Thus we have $f_a f_a^*(\nu, J) = f_a^*f_a(\nu, J)$.
\end{proof}

\begin{ex}
\label{ex:kappa2}
Continuing our running example, let $(\nu,J)$ be the rigged configuration of type $D_4$ from Example \ref{ex:running}.  Then $\kappa_4(\nu,J) = 2$ and 
\[
f_4^*f_4(\nu,J) = f_4f_4^*(\nu,J) = 
\begin{tikzpicture}[scale=.35,anchor=top,baseline=-18]
 \rpp{2}{0}{-1}
 \begin{scope}[xshift=5cm]
 \rpp{3,1}{-1,-1}{-1,-1}
 \end{scope}
 \begin{scope}[xshift=11.5cm]
 \rpp{2}{0}{-1}
 \end{scope}
 \begin{scope}[xshift=16.5cm]
 \rpp{3}{-1}{-2}
 \end{scope}
\end{tikzpicture}.
\]
With $a=4$, we have the diagram
\[
\begin{tikzpicture}[xscale=2,yscale=1.5,font=\normalsize]
\node (t) at (0,0) {$(\nu,J)$};
\node (s) at (-1,-1) {$f_4^*(\nu,J)$};
\node (a) at (1,-1) {$f_4(\nu,J)$};
\node (ss) at (-2,-2) {$f_4^*f_4^*(\nu,J)$};
\node (aa) at (2,-2) {$f_4f_4(\nu,J)$};
\node (as) at (0,-2) {$f_4^*f_4(\nu,J) = f_4f_4^*(\nu,J)$};
\node (sss) at (-2,-3) {$\substack{f_4^*f_4^*f_4^*(\nu,J) \\ = f_4f_4^*f_4^*(\nu,J)}$};
\node (asa) at (0,-3) {$\substack{f_4^*f_4^*f_4(\nu,J) \\ = f_4^*f_4f_4^*(\nu,J) \\ = f_4f_4^*f_4(\nu,J) \\ = f_4f_4f_4^*(\nu,J)}$};
\node (aaa) at (2,-3) {$\substack{f_4^*f_4f_4(\nu,J) \\ = f_4f_4f_4(\nu,J)}$};
\foreach \x in {-2,0,2}
 {\node at (\x,-4) {$\vdots$};}
\path[->]
 (t) edge (s)
 (t) edge (a)
 (s) edge (ss)
 (s) edge (as)
 (a) edge (aa)
 (a) edge (as)
 (ss) edge (sss)
 (as) edge (asa)
 (aa) edge (aaa)
 (sss) edge (-2,-3.85)
 (asa) edge (0,-3.85)
 (aaa) edge (2,-3.85);
\end{tikzpicture}
\]
As discussed in \cite[Cor. 2.8]{CT15}, $\kappa_4(\nu,J)$ counts how many times one must apply either $f_4$ or $f_4^*$ to $(\nu,J)$ to reach a point where $f_4$ and $f_4^*$ have the same affect.
\end{ex}


\begin{thm}
Let $e_a$ and $f_a$ be the crystal operators given by Definition~\ref{def:RC_crystal_ops}, and let $e_a^*$ and $f_a^*$ be given by Definition~\ref{def:RC_star_crystal_ops}. Then we have
\[
e_a^*  = * \circ e_a \circ *, \ \ \ \ \ \
f_a^*  = * \circ f_a \circ *.
\]
\end{thm}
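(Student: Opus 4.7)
The plan is to invoke Proposition~\ref{prop:weaker_conditions} with $B = \RC(\infty)$ carrying the crystal structure from Definition~\ref{def:RC_crystal_ops} and $B^\star = \RC(\infty)^*$ carrying the $\ast$-crystal structure from Definition~\ref{def:RC_star_crystal_ops}. Both contain the common highest weight element $b_0 = (\nu_\emptyset, J_\emptyset)$ of weight $0$, and the remaining crystal data is determined by the respective operators. Lemmas~\ref{lemma:kappa0}, \ref{lemma:kappa1}, and \ref{lemma:kappa2} already supply conditions~(\ref{item:star4})--(\ref{item:star6}) of the proposition, so only conditions~(\ref{item:star1})--(\ref{item:star3}) remain to be verified.

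Condition~(\ref{item:star1}) is immediate: in both Definitions~\ref{def:RC_crystal_ops} and~\ref{def:RC_star_crystal_ops}, the infinite supply of implicit length-$0$ rows with rigging $0$ and colabel $0$ guarantees that $f_a$ and $f_a^*$ always succeed, so neither $f_a(\nu, J)$ nor $f_a^*(\nu, J)$ is $0$. For condition~(\ref{item:star2}), fix $a \neq b$; the key observation is that $f_b$ modifies exactly one row in $(\nu, J)^{(b)}$ while preserving every colabel of $(\nu, J)$ outside that row (in particular, all colabels of $(\nu, J)^{(a)}$), and dually $f_a^*$ modifies exactly one row in $(\nu, J)^{(a)}$ while preserving every label of $(\nu, J)^{(b)}$. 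Consequently, $f_a^*$ selects the same row in $(\nu, J)^{(a)}$ regardless of whether $f_b$ has been applied first, and symmetrically for $f_b$; Equations~\eqref{eq:change_vac_e} and~\eqref{eq:change_vac_f} then confirm that the resulting (co)riggings agree, giving $f_a^* f_b = f_b f_a^*$.

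Condition~(\ref{item:star3}) requires $\kappa_a(\nu, J) \geq 0$. By Equation~\eqref{eq:RCjump}, this amounts to
\[
-\min(0, x_\ell) - \min(0, x_c) + p_\infty^{(a)} \geq 0,
\]
where $x_\ell$ and $x_c$ are the smallest label and colabel in $(\nu, J)^{(a)}$. If $\nu^{(a)}$ is empty, then $x_\ell = x_c = 0$ and $p_\infty^{(a)} = -\sum_{b \neq a} A_{ab}\, |\nu^{(b)}| \geq 0$. Otherwise, let $L$ denote the length of the longest row of $\nu^{(a)}$ and $x_L$ any rigging of such a row; then $x_L \geq x_\ell$ and $p_L^{(a)} - x_L \geq x_c$, so $p_L^{(a)} \geq x_\ell + x_c$. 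Since $m_j^{(a)} = 0$ for $j > L$, the computation
\[
p_\infty^{(a)} - p_L^{(a)} = -\sum_{b \neq a} A_{ab} \sum_{j > L} (j - L)\, m_j^{(b)} \geq 0
\]
yields $p_\infty^{(a)} \geq p_L^{(a)} \geq x_\ell + x_c$, and a short case split on the signs of $x_\ell, x_c$ finishes the argument.

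With all six hypotheses of Proposition~\ref{prop:weaker_conditions} verified, the proposition yields the desired identifications $e_a^* = \ast \circ e_a \circ \ast$ and $f_a^* = \ast \circ f_a \circ \ast$. The substantive work has already been absorbed into Lemmas~\ref{lemma:kappa0}--\ref{lemma:kappa2}. Among the remaining preliminary conditions, I expect~(\ref{item:star2}) to be the point requiring the most care, since one must track that the data on which $f_a^*$ and $f_b$ each make their selection is preserved under the other operator; condition~(\ref{item:star3}), while short, is a useful sanity check that the convexity-type structure of vacancy numbers is compatible with both crystal operations.
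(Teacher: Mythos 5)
Your proposal is correct and follows the same overall skeleton as the paper's proof: conditions (\ref{item:star1}) and (\ref{item:star2}) are dispatched by the same observations (the infinite supply of length-$0$ rows, and the fact that $f_b$ preserves colabels while $f_a^*$ preserves labels away from the modified rows), and conditions (\ref{item:star4})--(\ref{item:star6}) are delegated to Lemmas~\ref{lemma:kappa0}, \ref{lemma:kappa1}, and \ref{lemma:kappa2} exactly as in the paper, before invoking Proposition~\ref{prop:weaker_conditions}. The one place where you genuinely diverge is condition (\ref{item:star3}). The paper proves $\kappa_a(\nu,J) \geq 0$ by induction on the depth of $(\nu,J)$: it starts from $\kappa_a(\nu_\emptyset,J_\emptyset)=0$ and tracks how $\varepsilon_a$, $\varepsilon_a^*$, and $p_\infty^{(a)}$ change under $f_a$, $f_a^*$, and $f_b$ for $b \neq a$, leaning again on Lemmas~\ref{lemma:kappa0} and~\ref{lemma:kappa1} for the $b=a$ case. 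You instead give a direct, non-inductive argument: taking $L$ the longest row length and $x_L$ a rigging of such a row, the inequalities $x_L \geq x_\ell$ and $p_L^{(a)} - x_L \geq x_c$ give $p_L^{(a)} \geq x_\ell + x_c$, and since $m_j^{(a)} = 0$ for $j > L$ forces $p_\infty^{(a)} \geq p_L^{(a)}$, one gets $\kappa_a(\nu,J) \geq p_\infty^{(a)} - x_\ell - x_c \geq 0$ (indeed $-\min(0,x) \geq -x$ makes even the case split unnecessary). Your argument is arguably cleaner: it proves the inequality for any rigged configuration without reference to how it was generated, whereas the paper's induction intertwines condition (\ref{item:star3}) with the closure of $\RC(\infty)$ under the operators. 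Both are valid; the paper's route reuses machinery already in place, while yours isolates condition (\ref{item:star3}) as a purely combinatorial fact about vacancy numbers.
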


\begin{proof}
We show the conditions of Proposition~\ref{prop:star_properties} hold for $\RC(\infty)$ with the given crystal operations. Fix some $(\nu, J) \in \RC(\infty)$ and $a \in I$. 

We first note the fact that $f_a(\nu, J)$, $f_a^*(\nu, J) \neq 0$ follows immediately from the definitions. So we have Condition~(\ref{item:star1}). Now let $b \in I$.  As $f_b$ acts on labels and preserves colabels in $(\nu,J)^{(k)}$, for $k \neq b$ in $I$, and $f_a^*$ acts on colabels and preserves labels in $(\nu,J)^{(k)}$, for $k \neq a$ in $I$, it follows that $f_a^* f_b (\nu, J) = f_b f_a^* (\nu, J)$ for all $a \neq b$. Hence Condition~(\ref{item:star2}) is satisfied.

Lemma~\ref{lemma:kappa0} implies Condition~(\ref{item:star4}).

Lemma~\ref{lemma:kappa1} implies Condition~(\ref{item:star5}).

Lemma~\ref{lemma:kappa2} implies Condition~(\ref{item:star6}).

Thus it remains we prove Condition~(\ref{item:star3}), that $\kappa_a(\nu,J) \ge 0$. We prove this by induction on the depth of $(\nu,J)$.  Observe that $\kappa_a(\nu_\emptyset,J_\emptyset) = 0$, which is our base case.  Now suppose $\kappa_a(\nu,J) \ge 0$ for all $(\nu,J) \in \RC(\infty)$ at depth at most $d$.  It suffices to show that $\kappa_a\bigl(f_a(\nu,J)\bigr) \ge 0$ and $\kappa_a\bigl(f_a^*(\nu,J)\bigr) \ge 0$.

Note that all labels, except for the row of $\nu^{(a)}$ at which the box was added, possibly change by adding $-A_{ab}$ under $f_a$ by Equation~\eqref{eq:change_vac_f}. Additionally, $p_{\infty}^{(b)}$ changes by $-A_{ab}$. Thus for $b \neq a$ a label, and hence possibly $-\varepsilon_b(\nu, J)$, increases by $-A_{ab}$ and the colabels, and hence $-\varepsilon_b^*(\nu, J)$, stay fixed. Therefore, by the above and its dual, for $a \neq b$, we have
\[
\kappa_b\bigl(f_a(\nu, J)\bigr), \kappa_b\bigl(f_a^*(\nu, J)\bigr) \geq \kappa_b(\nu, J) \geq 0,
\]
since $-A_{ab} \geq 0$. Now it is sufficient to show that $\varepsilon_a^*(\nu, J)$ increases by at least $1 - \kappa_a(\nu, J)$ because $\varphi_a(\nu, J) = p_{\infty}^{(a)} + \varepsilon_a(\nu, J)$ decreases by $1$ after the application of $f_a$. If $\kappa_a(\nu, J) = 0$, then Lemma~\ref{lemma:kappa0} gives $f_a(\nu, J) = f_a^*(\nu, J)$, and so $\varepsilon_a^*(\nu, J)$ is increased by $1$. By Lemma~\ref{lemma:kappa1}, we have $\varepsilon_a^*(\nu, J) = \varepsilon_a^*\bigl(f_a(\nu, J)\bigr)$ when $\kappa_a(\nu, J) \geq 1$. Note that by our assumption, this is all possible values. Therefore, both $\kappa_a\bigl(f_a(\nu, J)\bigr), \kappa_a\bigl(f_a^*(\nu, J)\bigr) \geq 0$, as required.

Thus Conditions~(\ref{item:star1})--(\ref{item:star6}) are satisfied. Moreover, $\RC(\infty)$ and $\RC(\infty)^*$ are both generated by the highest weight element $(\nu_{\emptyset}, J_{\emptyset})$. Hence $\RC(\infty) = \RC(\infty)^*$ and the result follows from Proposition~\ref{prop:weaker_conditions}.
\end{proof}

Now from Definition~\ref{def:RC_crystal_ops} and Definition~\ref{def:RC_star_crystal_ops}, we have that the $*$-involution is given as follows.

\begin{cor}
\label{cor:RC_star_involution}
The $*$-involution on $\RC(\infty)$ is given by replacing every rigging $x$ of a row of length $i$ in $(\nu, J)^{(a)}$ by the corresponding corigging $p_i^{(a)} - x$ for all $(a, i) \in \HH$.
\end{cor}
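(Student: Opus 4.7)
Let $\sigma : \RC(\infty) \to \RC(\infty)$ denote the map that sends $(\nu,J)$ to the rigged configuration obtained by replacing every rigging $x$ on a row of length $i$ in $(\nu,J)^{(a)}$ by its corigging $p_i^{(a)} - x$. The plan is to show $\sigma = {*}$. First I would observe that $\sigma$ is a well-defined involution: the partitions $\nu^{(a)}$ are untouched, so the vacancy numbers $p_i^{(a)}$ do not change under $\sigma$, and hence applying $\sigma$ twice returns each rigging $x$ via $p_i^{(a)} - (p_i^{(a)} - x) = x$. It is also immediate that $\sigma(\nu_\emptyset, J_\emptyset) = (\nu_\emptyset, J_\emptyset)$ since the empty configuration has no riggings to modify.

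The heart of the argument is the observation that under $\sigma$, the definitions of $e_a,f_a$ and of $e_a^*,f_a^*$ are literally interchanged. Comparing Definition~\ref{def:RC_crystal_ops} and Definition~\ref{def:RC_star_crystal_ops}, every occurrence of ``smallest rigging'' becomes ``smallest colabel'' (and vice versa), while the length-selection rule and the $\pm 1$ shift on the chosen row are identical. Because $\sigma$ swaps the notions of label and colabel on every row, this yields the intertwining identities
\[
\sigma \circ e_a = e_a^* \circ \sigma, \qquad \sigma \circ f_a = f_a^* \circ \sigma,
\]
for every $a \in I$. In particular, this shows a posteriori that $\sigma$ lands in $\RC(\infty)$ (equivalently, in $\RC(\infty)^*$), as is also clear from the proof of the preceding theorem that $\RC(\infty) = \RC(\infty)^*$.

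The preceding theorem asserts $e_a^* = {*}\circ e_a \circ {*}$ and $f_a^* = {*} \circ f_a \circ {*}$, so combining with the previous display yields
\[
({*} \circ \sigma) \circ e_a = e_a \circ ({*} \circ \sigma), \qquad ({*} \circ \sigma) \circ f_a = f_a \circ ({*} \circ \sigma).
\]
Since ${*}$ preserves weight and $u_\infty = (\nu_\emptyset,J_\emptyset)$ is the unique element of weight $0$, we have ${*}(\nu_\emptyset,J_\emptyset) = (\nu_\emptyset,J_\emptyset)$, and thus ${*} \circ \sigma$ fixes the highest weight element. Because $\RC(\infty)$ is connected and generated from $(\nu_\emptyset,J_\emptyset)$ by the operators $f_a$, any map commuting with all $f_a$ and fixing $(\nu_\emptyset,J_\emptyset)$ must be the identity. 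Hence ${*} \circ \sigma = \id$, and since both maps are involutions, $\sigma = {*}$, which is exactly the statement of the corollary.

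There is no serious obstacle: once one recognizes that $\sigma$ manifestly conjugates the unstarred crystal operators to the starred ones by inspection of the two definitions, the corollary falls out of the theorem via the standard connectivity/generation argument. The only minor point requiring care is that $\sigma$ leaves the vacancy numbers invariant (so that the notion of ``colabel'' is unambiguous before and after), which is immediate since $\sigma$ does not alter the underlying partitions.
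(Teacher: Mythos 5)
Your proof is correct and follows essentially the same route as the paper, which simply asserts the corollary as an immediate consequence of the theorem $f_a^* = {*}\circ f_a\circ{*}$ together with the observation that Definitions~\ref{def:RC_crystal_ops} and~\ref{def:RC_star_crystal_ops} are interchanged by swapping labels with colabels. You have merely made explicit the intertwining identity $\sigma\circ f_a = f_a^*\circ\sigma$ and the standard generation argument from $(\nu_\emptyset,J_\emptyset)$ that the paper leaves implicit.
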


Let $\g$ and $\widehat{\g}$ be symmetrizable Kac-Moody algebras such that there exists a folding of the Dynkin diagram of $\g$ to the Dynkin diagram of $\widehat{\g}$ with corresponding index sets $I$ and $\widehat{I}$, respectively.  Consider the map $\phi \colon \widehat{I} \searrow I$ induced by such a Dynkin diagram folding and consider a sequence $(\gamma_a \in \ZZ_{>0})_{a \in I}$ such that the map $\Psi \colon P \longrightarrow \widehat{P}$ given by
\[
\Lambda_a \mapsto \gamma_a \sum_{b \in \phi^{-1}(a)} \Lambda_b
\]
also satisfies
\[
\alpha_a \mapsto \gamma_a \sum_{b \in \phi^{-1}(a)} \alpha_b.
\]
This induces a \defn{virtualization map} $v$ of $B(\infty)$ of type $\g$ to that of type $\widehat{\g}$. In particular, on $\RC(\infty)$, the image $(\widehat{\nu}, \widehat{J})$ of a rigging configuration $(\nu, J)$ is given by
\[
\widehat{m}_{\gamma_a i}^{(b)} = m_i^{(a)},
\hspace{30pt}
\widehat{J}_{\gamma_a i}^{(b)} = \gamma_a J_i^{(a)},
\]
for all $b \in \phi^{-1}(a)$. We refer the reader to~\cite{OSS03III, SalS15III, SchillingS15} for more details.

\begin{cor}
Let $v$ be a virtualization map on $B(\infty)$ of type $\g$ to $\widehat{\g}$. Then
\[
\ast \circ v = v \circ \ast.
\]
\end{cor}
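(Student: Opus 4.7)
The plan is to combine the closed-form description of the $\ast$-involution given by Corollary~\ref{cor:RC_star_involution} with the row-by-row formula for the virtualization map $v$ recalled just above the corollary. Everything will be driven by the single scalar identity
\[
\widehat{p}_{\gamma_a i}^{(b)} = \gamma_a\, p_i^{(a)} \qquad \text{for all } (a,i) \in \HH \text{ and } b \in \phi^{-1}(a),
\]
which is the standard compatibility of vacancy numbers with virtualization and is recorded in~\cite{OSS03III, SalS15III, SchillingS15}.

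First I would fix $(\nu, J) \in \RC(\infty)$, a node $a \in I$, and a row of length $i$ in $(\nu, J)^{(a)}$ carrying rigging $x$, and then compute both compositions row by row. Applying $v \circ \ast$, the involution first sends $x$ to the corigging $p_i^{(a)} - x$ (in the same row of the same rigged partition), and then $v$ produces, for each $b \in \phi^{-1}(a)$, a row of length $\gamma_a i$ in $\widehat{\nu}^{(b)}$ carrying rigging $\gamma_a(p_i^{(a)} - x)$. Applying $\ast \circ v$, the virtualization map first produces the same row of length $\gamma_a i$ in $\widehat{\nu}^{(b)}$ but with rigging $\gamma_a x$, and then $\ast$ replaces this by the virtual corigging $\widehat{p}_{\gamma_a i}^{(b)} - \gamma_a x$. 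Since the underlying virtual shape $\widehat{\nu}$ is the same in both computations, agreement on each row is exactly the equality
\[
\widehat{p}_{\gamma_a i}^{(b)} - \gamma_a x = \gamma_a\bigl(p_i^{(a)} - x\bigr),
\]
which is the vacancy-number identity displayed above.

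The hard part is therefore compressed into the vacancy-number identity, which is not new. It follows by expanding $\widehat{p}_{\gamma_a i}^{(b)}$ with the formula~\eqref{eq:vacancy} and the virtualization relation $\widehat{m}_{\gamma_{a'} k}^{(c)} = m_k^{(a')}$ for $c \in \phi^{-1}(a')$, and then invoking the folding identity on the Cartan matrix that expresses compatibility of $\Psi$ with simple roots; this is precisely the condition that makes $v$ a well-defined virtualization map, so the identity is automatic in the present setup. Hence the corollary reduces to a direct unwinding of definitions.
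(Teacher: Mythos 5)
Your proof is correct and follows exactly the paper's argument: the paper's proof is a one-line appeal to the identity $\widehat{p}_{\gamma_a i}^{(b)} = \gamma_a p_i^{(a)}$ together with Corollary~\ref{cor:RC_star_involution}, which is precisely the row-by-row computation you spell out. Your additional remark on where the vacancy-number identity comes from is accurate but not needed, as the paper takes it as known from the cited references.
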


\begin{proof}
This follows from the fact
$
\widehat{p}_{\gamma_a i}^{(b)} = \gamma_a p_i^{(a)}
$
for all $b \in \phi^{-1}(a)$ and Corollary~\ref{cor:RC_star_involution}.
\end{proof}

\section{Highest weight crystals}
\label{sec:hw_crystals}

We wish to classify the subcrystal of $\RC(\infty)$ which is isomorphic to $B(\lambda)$ with respect to the $\ast$-crystal structure.  In particular, defining $B(\lambda)$ requires the additional condition that $\varphi_a^*(\nu, J) = \max\{k \in \ZZ \mid (f_a^*)^k(\nu, J) \neq 0\}$.  For example, the condition $\varphi_a(\nu,J) = \max\{ k \in \ZZ \mid f_a^k(\nu,J) \neq 0 \}$ means, for all riggings $x$ corresponding to a row of length $i$ in $\nu^{(a)}$, we have $x \leq p_i^{(a)}$.  If we consider the natural dual to this, we have $p_i^{(a)} - x \leq p_i^{(a)}$, or equivalently $x \geq 0$. We show this is the correct condition by proving the dual version of~\cite[Thm.~6.1]{SalS15}.

For any $\lambda \in P^+$, we define
\[
\RC(\lambda) := \{ (\nu, J) \in \RC(\infty) \mid \max J_i^{(a)} \leq p_i^{(a)}(\nu; \lambda) \text{ for all } (a, i) \in \HH \},
\]
where
\begin{equation}
\label{eq:vacancy_numbers}
p_i^{(a)}(\nu; \lambda) := \inner{h_a}{\lambda} - \sum_{b \in I} A_{ab} \sum_{j \in \ZZ_{>0}} \min(i,j) m_j^{(b)}.
\end{equation}
Note that Equation~\eqref{eq:vacancy_numbers} differs from Equation~\eqref{eq:vacancy} by
\[
p_i^{(a)}(\nu) + \inner{h_a}{\lambda} = p_i^{(a)}(\nu; \lambda).
\]
When there is no danger of confusion, we will simply write $p_i^{(a)} = p_i^{(a)}(\nu; \lambda)$.

We consider a crystal structure on $\RC(\lambda)$ as that inherited from $\RC(\infty)$ under the natural projection except with $\wt(\nu, J) = \lambda - \sum_{a \in I} \lvert \nu^{(a)} \rvert \alpha_a$.

\begin{thm}[{\cite{SalS15,SalS15II,S06}}]
We have
$
\RC(\lambda) \iso B(\lambda).
$
\end{thm}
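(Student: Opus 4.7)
The plan is to apply Kashiwara's realization of $B(\lambda)$ as the subset
\[
B^\lambda := \{b \in B(\infty) : \varepsilon_a^*(b) \leq \langle h_a, \lambda \rangle \text{ for all } a \in I\}
\]
of $B(\infty)$, equipped with shifted weight $\wt + \lambda$, the same operators $e_a, f_a$, and the convention that $f_a b$ is set to $0$ if it falls outside $B^\lambda$. Combined with the description of $\ast$ on $\RC(\infty)$ from the main theorem of Section~\ref{sec:*crystal} (specifically Corollary~\ref{cor:RC_star_involution}) and the isomorphism $\RC(\infty) \iso B(\infty)$ of Theorem~\ref{thm:binf_isomorphism}, the claim reduces to (a) translating the defining inequality of $\RC(\lambda)$ into the $\varepsilon_a^*$-bound and (b) verifying the crystal structures agree.

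For step (a), the identity $p_i^{(a)}(\nu; \lambda) = p_i^{(a)}(\nu) + \langle h_a, \lambda \rangle$ lets me rewrite $\max J_i^{(a)} \leq p_i^{(a)}(\nu; \lambda)$ as $p_i^{(a)}(\nu) - x \geq -\langle h_a, \lambda \rangle$ for every rigging $x$ of a length-$i$ row in $(\nu, J)^{(a)}$; that is, every colabel is at least $-\langle h_a, \lambda \rangle$. Since the length-$0$ rows always contribute a colabel of $0$, the smallest colabel $x_c$ satisfies $\min(0, x_c) \geq -\langle h_a, \lambda \rangle$. By Proposition~\ref{prop:ep_phi_star}, this is equivalent to $\varepsilon_a^*(\nu, J) \leq \langle h_a, \lambda \rangle$ for all $a \in I$.

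For step (b), the weight match is immediate from the definition $\wt(\nu, J) = \lambda - \sum_a \lvert \nu^{(a)} \rvert \alpha_a$. What remains is to check that $f_a(\nu, J) \in \RC(\lambda)$ if and only if $\varepsilon_a^*\bigl(f_a(\nu, J)\bigr) \leq \langle h_a, \lambda \rangle$, and similarly that $e_a$ preserves $\RC(\lambda)$. When $\kappa_a(\nu, J) \geq 1$, Lemma~\ref{lemma:kappa1} gives $\varepsilon_a^*\bigl(f_a(\nu, J)\bigr) = \varepsilon_a^*(\nu, J)$, so membership in $\RC(\lambda)$ is preserved automatically. When $\kappa_a(\nu, J) = 0$, Lemma~\ref{lemma:kappa0} gives $f_a(\nu, J) = f_a^*(\nu, J)$, so $\varepsilon_a^*$ increases by exactly $1$; the bound is violated precisely when $\varepsilon_a^*(\nu, J) = \langle h_a, \lambda \rangle$, which is the correct stopping rule. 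The preservation under $e_a$ follows by a symmetric analysis (or by noting that $e_a$ is the partial inverse of $f_a$).

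The cosmetic translation in step (a) is the easy part; the main obstacle is the case analysis in step (b), which requires careful invocation of the Section~\ref{sec:*crystal} lemmas to match the restriction rule on the rigging side with the $\varepsilon_a^*$-cutoff on the $B(\lambda)$ side. Once this compatibility is established, the theorem follows immediately from Kashiwara's characterization of $B(\lambda)$ inside $B(\infty)$.
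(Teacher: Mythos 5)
Your argument is correct, but it runs in the opposite direction from the paper. This theorem is quoted from \cite{SalS15,SalS15II,S06}, where it is proved without any reference to the $\ast$-structure: the route there (whose dual version appears at the end of Section~\ref{sec:hw_crystals}) is Nakashima's characterization of $B(\lambda)$ as the connected component of $c \otimes t_\lambda \otimes u_\infty$ in $C \otimes T_\lambda \otimes B(\infty)$, matched against $\RC(\lambda)$ by an explicit weight-preserving bijection commuting with the crystal operators. The paper then uses the theorem, together with the new $\ast$-structure, to \emph{derive} the $\varepsilon_a^*$-characterization of $\RC(\lambda)$ (the proposition immediately following it, i.e.\ \cite[Prop.~8.2]{K95} realized in the RC model); you instead take Kashiwara's $\varepsilon_a^*$-characterization of $B(\lambda)$ inside $B(\infty)$ as the input and recover the theorem as output. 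This is legitimate and non-circular within this paper, since the Section~\ref{sec:*crystal} results you invoke (Corollary~\ref{cor:RC_star_involution}, Proposition~\ref{prop:ep_phi_star}) nowhere use $\RC(\lambda) \iso B(\lambda)$; the trade-off is that a result which is logically prior to, and independent of, the $\ast$-involution now rests on the paper's main theorem. Two small remarks: your step (b) is redundant, because step (a) is an if-and-only-if membership criterion valid for every element of $\RC(\infty)$, so applying it to $f_a(\nu,J)$ already shows that the two truncation conventions coincide and the $\kappa_a$ case analysis buys nothing; and you should say explicitly that $\langle h_a, \lambda \rangle \ge 0$ (i.e.\ $\lambda \in P^+$) is what lets you pass between ``every colabel is at least $-\langle h_a, \lambda\rangle$'' and the corresponding bound on $\min(0, x_c)$, using also that the length-$0$ rows force $x_c \le 0$ to be attained only by genuine rows.
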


Using the $\ast$-crystal structure, we easily obtain~\cite[Prop.~8.2]{K95}.  (We refer the reader to \cite{K95} or \cite{HK02} for an exposition on the tensor product of crystals.  Note that we are using the opposite, anti-Kashiwara, convention.  The precise definition in this setting may be found, for example, in \cite{SalS15}.)

\begin{prop}
Let $\lambda \in P^+$. Then we have
\[
\RC(\lambda) \cong \{ t_\lambda \otimes (\nu,J) \in T_\lambda \otimes \RC(\infty) \mid \varepsilon_a^*(\nu,J) \le \langle h_a , \lambda \rangle \text{ for all } a \in I\}.
\]
\end{prop}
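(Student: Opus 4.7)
The plan is to exhibit the desired isomorphism via the evident map $(\nu, J) \leftrightarrow t_\lambda \otimes (\nu, J)$. The argument naturally separates into a set-theoretic matching of the two descriptions and a verification that the crystal data transport correctly.

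For the set-theoretic step, I would apply Proposition~\ref{prop:ep_phi_star}: writing $x_c$ for the smallest colabel appearing in $(\nu,J)^{(a)}$, we have $\varepsilon_a^*(\nu, J) = -\min(0, x_c)$. Since $\lambda \in P^+$ gives $\inner{h_a}{\lambda} \geq 0$, the inequality $\varepsilon_a^*(\nu, J) \leq \inner{h_a}{\lambda}$ is equivalent to the requirement $x_c \geq -\inner{h_a}{\lambda}$, i.e., that every corigging $p_i^{(a)}(\nu) - x$ with $x \in J_i^{(a)}$ satisfies this lower bound. Using the relation $p_i^{(a)}(\nu; \lambda) = p_i^{(a)}(\nu) + \inner{h_a}{\lambda}$ noted after Equation~\eqref{eq:vacancy_numbers}, this rearranges precisely to $\max J_i^{(a)} \leq p_i^{(a)}(\nu; \lambda)$ for every $(a,i) \in \HH$, which is the defining condition of $\RC(\lambda)$.

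For the crystal data, the weights coincide because
\[
\wt\bigl(t_\lambda \otimes (\nu,J)\bigr) = \lambda - \sum_{a \in I} \lvert \nu^{(a)} \rvert \alpha_a,
\]
matching the definition on $\RC(\lambda)$. Since $T_\lambda$ is a one-element crystal, under the anti-Kashiwara tensor product rule the operators $e_a$ and $f_a$ act only on the second factor, agreeing with the operators on $\RC(\lambda)$ inherited from $\RC(\infty)$; the formulas for $\varepsilon_a$ and $\varphi_a$ on the tensor product then specialize to those on $\RC(\lambda)$, with the shift $\inner{h_a}{\lambda}$ coming from $t_\lambda$.

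The one thing to be careful about, which I view as the main obstacle, is that the two sides have compatible notions of when an arrow terminates. On the tensor product side, $f_a(t_\lambda \otimes (\nu,J))$ is declared to be $0$ precisely when the result violates the $\varepsilon_a^*$-bound, while on the $\RC(\lambda)$ side it is declared $0$ when the result leaves the rigging bound $\max J_i^{(a)} \leq p_i^{(a)}(\nu; \lambda)$. Since the set-theoretic step already shows these two subsets coincide element-by-element, the two stopping rules agree automatically, and combining everything yields the claimed crystal isomorphism.
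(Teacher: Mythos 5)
Your argument is correct and follows the same route as the paper: the heart of both proofs is the computation, via Proposition~\ref{prop:ep_phi_star} and the shift $p_i^{(a)}(\nu) + \inner{h_a}{\lambda} = p_i^{(a)}(\nu;\lambda)$, that the bound $\varepsilon_a^*(\nu,J) \le \inner{h_a}{\lambda}$ is equivalent to $\max J_i^{(a)} \le p_i^{(a)}(\nu;\lambda)$. The paper's proof consists only of this set-theoretic equivalence; your additional checks on weights, operators, and the compatibility of the two stopping rules are sound but are left implicit in the paper.
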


\begin{proof}
Fix some $(\nu, J) \in \RC(\infty)$. Let $x$ be a rigging of a row of length $i$. We have
\[
\inner{h_a}{\lambda} \geq \varepsilon_a^*(\nu, J) = -\min(0, p_i^{(a)} - x)
\]
if and only if
\[
p_i^{(a)} + \inner{h_a}{\lambda} \geq x.
\]
Recall that the left-hand side is the vacancy numbers in $\RC(\lambda)$ by Equation~\eqref{eq:vacancy_numbers}, and so we have the defining relation for $\RC(\lambda)$.
\end{proof}

By letting $\pi_{\lambda} \colon B(\infty) \longrightarrow B(\lambda)$ be the natural projection, we can rephrase the last proposition as
\[
\tau(b^*) = \varepsilon(b), \qquad \qquad \varepsilon(b^*) = \tau(b),
\]
where $\varepsilon(b) = \sum_{a \in I} \varepsilon_a(b) \Lambda_a$ and $\tau(b) = \min \{ \lambda \mid \pi_{\lambda}(b) \in B(\lambda) \}$.   In~\cite{SalS15II}, $\tau$ was called the difference statistic and can be explicitly given on rigged configurations by
\[
\tau(\nu, J) = \sum_{a \in I} \min_{i \in \ZZ_{>0}} \{p_i^{(a)} - \max J_i^{(a)} \} \Lambda_a.
\]

Now we formalize the dual version of $\RC(\lambda)$.

\begin{dfn}
Let $\RC(\lambda)^*$ denote the closure of $(\nu_{\emptyset}, J_{\emptyset})$ under $e_a^*$ and the following modified $f_a^*$, both using the vacancy numbers given by Equation~\eqref{eq:vacancy_numbers} to determine the colabels. Consider $f_a^*$ as in Definition~\ref{def:RC_star_crystal_ops} except define $f_a^*(\nu, J) = 0$ if in the result, there exists a rigging $x < 0$.
\end{dfn}

Note that the condition that $x \geq 0$ is equivalent to $p_i^{(a)} - x \leq p_i^{(a)}$. Hence, by duality, the proof of~\cite[Lemma~3.6]{S06} holds, and we obtain the following.

\begin{lemma}
\label{lemma:lower_regular}
Let $(\nu, J) \in \RC(\lambda)^*$. Then
\[
\varphi_a^*(\nu, J) = \max\{ k \in \ZZ \mid (f_a^*)^k (\nu, J) \neq 0 \}
\]
for all $a \in I$.
\end{lemma}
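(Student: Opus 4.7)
The plan is to establish Lemma~\ref{lemma:lower_regular} by adapting the proof of~\cite[Lemma~3.6]{S06} verbatim under the duality principle stated in Remark~\ref{remark:duality}. That earlier result establishes the analogous identity
\[
\varphi_a(\nu, J) = \max\{k \in \ZZ \mid f_a^k(\nu, J) \neq 0\}
\]
for $(\nu, J) \in \RC(\lambda)$, where the essential input is that $\RC(\lambda)$ imposes the upper bound $x \leq p_i^{(a)}(\nu;\lambda)$ on every rigging $x$. Interchanging ``label'' with ``colabel'', $f_a$ with $f_a^*$, and $\varepsilon_a$ with $\varepsilon_a^*$ converts the bound $x \leq p_i^{(a)}$ into $p_i^{(a)} - x \leq p_i^{(a)}$, i.e., $x \geq 0$, which is precisely the condition cutting out $\RC(\lambda)^*$. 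In addition, Proposition~\ref{prop:ep_phi_star} supplies the dual of Proposition~\ref{prop:ep_phi} needed as the starting point of the induction.

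I would begin by checking that each ingredient of the non-star proof has a dual counterpart already available. The vacancy-number update rules in Equation~\eqref{eq:change_vac_e} and Equation~\eqref{eq:change_vac_f} are symmetric in labels and colabels, Lemma~\ref{lemma:convexity} is self-dual, and the identity $\varphi_a^*(\nu,J) = \varepsilon_a^*(\nu,J) + \langle h_a, \wt(\nu,J)\rangle$ mirrors its non-star analog. Thus every step of the original calculation transports after the symbolic substitution.

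With the dictionary in place, the lemma follows in two parts. First, if $\varphi_a^*(\nu, J) \geq 1$, one verifies that $f_a^*(\nu, J)$ remains in $\RC(\lambda)^*$ and that $\varphi_a^*$ drops by exactly $1$: this is the dual of the induction step in~\cite[Lemma~3.6]{S06} and uses Proposition~\ref{prop:ep_phi_star} together with convexity to track how the minimum colabel evolves. Second, when $\varphi_a^*(\nu, J) = 0$, one checks that the label of the row that $f_a^*$ would act upon is already $0$, so the definition of $f_a^*$ in $\RC(\lambda)^*$ forces the output to be $0$; this matches the base case of the dual induction.

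The main obstacle, and the only place genuine work is required, is the case analysis that tracks how the smallest colabel migrates between rows of different lengths as $f_a^*$ is iterated, since $f_a^*$ may jump to a strictly longer row at which convexity causes the colabel to drop by more than one. This is handled just as in the proofs of Lemmas~\ref{lemma:kappa0}--\ref{lemma:kappa2}: whenever such a jump occurs, convexity forces equality of the relevant vacancy numbers, so the induction step preserves the intended decrement of $\varphi_a^*$ and the nonnegativity of the resulting label.
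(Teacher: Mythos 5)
Your proposal is correct and is essentially the paper's own argument: the paper proves this lemma by the single observation that the defining condition $x \geq 0$ of $\RC(\lambda)^*$ is equivalent to $p_i^{(a)} - x \leq p_i^{(a)}$, i.e., is exactly the dual of the condition $x \leq p_i^{(a)}$ cutting out $\RC(\lambda)$, so the proof of \cite[Lemma~3.6]{S06} carries over by the duality of Remark~\ref{remark:duality}. Your additional elaboration of the induction and the case analysis is a more detailed unpacking of the same dualization, not a different route.
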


Let $\RC_{\lambda}(\infty)^* = T_{\lambda} \otimes \RC(\infty)$ with the $\ast$-crystal structure. Let $C = \{c\}$ be the crystal given by
\[
\wt(c) = 0, \qquad
\varphi_a^*(c) = \varepsilon_a^*(c) = 0, \qquad
f_a^*c = e_a^*c = 0,
\]
for all $a \in I$. Nakashima \cite[Thm. 3.1]{N99} has shown that the connected component generated by $c \otimes t_{\lambda} \otimes u_{\infty}$ is isomorphic to $B(\lambda)$.

In~\cite{SalS15}, the map $\psi_{\lambda,\mu} \colon \RC(\lambda) \longrightarrow \RC(\mu)$, for $\lambda \leq \mu$ in $P^+ \sqcup \{\infty\}$, is the identity map on rigged configurations. This follows because $e_a$ and $f_a$ are determined by the riggings alone, not the vacancy numbers, and so preserving the labels is sufficient to show $\psi_{\lambda,\mu}$ commutes with the crystal operators. However, for the $\ast$-crystal structure, we need to preserve coriggings, and as such, we need to take into account the shift in vacancy numbers. Thus, define a map $\psi_{\lambda,\mu}^* \colon \RC(\lambda)^* \longrightarrow \RC(\mu)^*$ as the identity on the partitions but with new riggings
\[
x' = x + \langle h_a , \mu - \lambda\rangle,
\]
where we make the convention that $\inner{h_a}{\infty} = 0$. Note that $\psi_{\lambda,\mu}^*$ commutes with the crystal operators (however, it only becomes a crystal embedding after an appropriate tensor product is taken to shift weights).

With this modification, Proposition~\ref{prop:ep_phi_star}, and Lemma~\ref{lemma:lower_regular}, we have the dual argument of~\cite[Thm.~6.1]{SalS15}. 

\begin{thm}
Let $C_{\emptyset}^*$ denote the connected component of $C \otimes \RC_{\lambda}(\infty)^*$ generated by $c \otimes (\nu_{\emptyset}, J_{\emptyset})$. The map $\Psi \colon C_{\emptyset}^* \longrightarrow \RC(\lambda)^*$ given by 
\[
c \otimes (\nu_{\lambda}, J_{\lambda}) \mapsto (\psi_{\lambda,\infty}^*)^{-1}(\nu_{\lambda}, J_{\lambda})
\]
is a weight-preserving bijection which commutes with $e_a^*$ and $f_a^*$ for every $a\in I$.
\end{thm}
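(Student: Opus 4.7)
The plan is to prove this by mirroring, under duality, the argument for \cite[Thm.~6.1]{SalS15}. I would verify that $\Psi$ is a well-defined bijection, that it preserves weight, and that it commutes with the starred Kashiwara operators; all three will reduce to a careful comparison of the rigging shift $(\psi_{\lambda,\infty}^*)^{-1}$ with the corresponding shift in vacancy numbers.

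The well-definedness and bijectivity would follow from matching characterizations on both sides. By Nakashima's theorem combined with the dual of the proposition immediately preceding this theorem, $C_\emptyset^*$ should coincide with $\{c \otimes t_\lambda \otimes (\nu, J) : \varepsilon_a(\nu, J) \leq \langle h_a, \lambda\rangle \text{ for all } a \in I\}$ inside $C \otimes T_\lambda \otimes \RC(\infty)$. On the other hand, $(\psi_{\lambda,\infty}^*)^{-1}$ keeps the partitions fixed while adding $\langle h_a, \lambda\rangle$ to every rigging in component $a$, and by Proposition~\ref{prop:ep_phi}, the smallest rigging in $(\nu, J)^{(a)}$ equals $-\varepsilon_a(\nu, J)$ whenever it is nonpositive. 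Hence the image has all riggings $\geq 0$ --- the defining condition of $\RC(\lambda)^*$ --- precisely when $\varepsilon_a(\nu, J) \leq \langle h_a, \lambda\rangle$ holds for every $a$. This matches the two characterizations and makes $\Psi$ a bijection. Weight preservation is automatic since $\psi_{\lambda,\infty}^*$ is the identity on partitions, so $\wt(c \otimes t_\lambda \otimes (\nu, J)) = \lambda - \sum_a \lvert \nu^{(a)}\rvert \alpha_a$ agrees with the weight of $\Psi(c \otimes t_\lambda \otimes (\nu, J))$.

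Commutation with $e_a^*$ and $f_a^*$ rests on the key identity that the rigging shift exactly cancels the vacancy-number shift: a row of length $i$ in component $a$ has corigging $p_i^{(a)}(\nu; \lambda) - (x + \langle h_a, \lambda\rangle) = p_i^{(a)}(\nu) - x$ in $\RC(\lambda)^*$, the same as its corigging in $\RC(\infty)$. Since $e_a^*$ and $f_a^*$ are defined by selecting a row with smallest corigging and changing its corigging by $\pm 1$, and since these data are identical under $\Psi$, the operators act compatibly. The modification in $\RC(\lambda)^*$ setting $f_a^*(\nu, J) = 0$ whenever the result would have a negative rigging corresponds exactly to the failure of $f_a^*$ to preserve membership in $C_\emptyset^*$, verified by Lemma~\ref{lemma:lower_regular}.

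The main obstacle I expect is the precise dualization of the preceding proposition in the first step. Establishing that $C_\emptyset^*$ is cut out by the $\varepsilon_a$-inequality (rather than an $\varepsilon_a^*$-inequality) requires retracing the signature-rule argument under the swap of starred and unstarred operators, and confirming that the role of the $C$-factor --- which rules out spurious applications of $e_a^*$ at the top, making the generator genuinely highest weight for the starred structure --- is the same as in Nakashima's original formulation.
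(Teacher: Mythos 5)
Your proposal is correct and follows the same route the paper intends: dualizing \cite[Thm.~6.1]{SalS15} by using the corigging-preserving shift $(\psi_{\lambda,\infty}^*)^{-1}$, Proposition~\ref{prop:ep_phi_star}, and Lemma~\ref{lemma:lower_regular}, with the key observation that $p_i^{(a)}(\nu;\lambda) - (x + \langle h_a,\lambda\rangle) = p_i^{(a)}(\nu) - x$ so that colabels (and hence $e_a^*$, $f_a^*$) are untouched by the shift. The paper leaves the argument as a one-line pointer to the dual of \cite[Thm.~6.1]{SalS15}; your write-up fills in exactly the details that dualization requires, including the correct swap from the $\varepsilon_a^*$-inequality to the $\varepsilon_a$-inequality in the characterization of the connected component.
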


\begin{cor}
Let $\g$ be of symmetrizable type. Then $\RC(\lambda)^* \iso B(\lambda)$.
\end{cor}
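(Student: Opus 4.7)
The plan is to combine the theorem immediately preceding this corollary with the Nakashima result cited just before that theorem. The preceding theorem already furnishes a weight-preserving bijection $\Psi \colon C_\emptyset^* \longrightarrow \RC(\lambda)^*$ intertwining $e_a^*$ and $f_a^*$, so all that remains is to identify $C_\emptyset^*$ itself with $B(\lambda)$ as an abstract $U_q(\g)$-crystal.

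First, I would transfer Nakashima's statement from $B(\infty)$ to $\RC(\infty)$. By Theorem~\ref{thm:binf_isomorphism} together with the main theorem of Section~\ref{sec:*crystal} (which shows that the operators of Definition~\ref{def:RC_star_crystal_ops} realize $\ast \circ e_a \circ \ast$ and $\ast \circ f_a \circ \ast$ on $\RC(\infty)$), the set $\RC(\infty)$ equipped with its starred data is isomorphic to $B(\infty)$ with Kashiwara's $*$-crystal structure, via a map sending $(\nu_\emptyset, J_\emptyset) \mapsto u_\infty$. Tensoring with the invariant factors $C$ and $T_\lambda$ then identifies the connected component $C_\emptyset^*$ of $c \otimes (\nu_\emptyset, J_\emptyset)$ with the connected component of $c \otimes t_\lambda \otimes u_\infty$ inside $C \otimes T_\lambda \otimes B(\infty)$, which by Nakashima's theorem is isomorphic to $B(\lambda)$.

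Second, I would promote the preceding theorem's conclusion that $\Psi$ is a weight-preserving bijection intertwining $e_a^*$ and $f_a^*$ to a full isomorphism of abstract $U_q(\g)$-crystals. Proposition~\ref{prop:ep_phi_star} computes $\varepsilon_a^*$ and $\varphi_a^*$ from the minimal colabel and weight, and Lemma~\ref{lemma:lower_regular} expresses $\varphi_a^*$ on $\RC(\lambda)^*$ as the length of the longest $(f_a^*)^k$-string, exactly as in $B(\lambda)$. Since $\Psi$ preserves weights and intertwines the starred Kashiwara operators, these statistics are forced to match. Chaining with the first step then yields $\RC(\lambda)^* \iso B(\lambda)$.

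The real content is hidden in the preceding theorem and Lemma~\ref{lemma:lower_regular}; the main point that still requires genuine care is the truncation built into the definition of $\RC(\lambda)^*$, namely that $f_a^*$ returns $0$ whenever the resulting rigging would be negative. This constraint $x \geq 0$ is dual to the cutoff $x \leq p_i^{(a)}$ defining $\RC(\lambda)$, and via Lemma~\ref{lemma:lower_regular} it encodes precisely the $\varphi_a^* = \inner{h_a}{\lambda}$ truncation appearing in Nakashima's tensor product description, so the two identifications patch together without additional bookkeeping.
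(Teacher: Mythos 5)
Your proposal is correct and follows the paper's (implicit) route exactly: the corollary is stated without a separate proof precisely because it is obtained by chaining the preceding theorem's bijection $\Psi \colon C_{\emptyset}^* \longrightarrow \RC(\lambda)^*$ with Nakashima's identification of the connected component of $c \otimes t_\lambda \otimes u_\infty$ with $B(\lambda)$, transported to $\RC(\infty)$ via Theorem~\ref{thm:binf_isomorphism} and the main theorem of Section~\ref{sec:*crystal}. Your observation that the cutoff $x \geq 0$ in the definition of $\RC(\lambda)^*$ is what makes Lemma~\ref{lemma:lower_regular} (and hence the matching of the $\varphi_a^*$ statistics) work is exactly the point the paper emphasizes before stating that definition.
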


Hence, we can now construct an explicit crystal isomorphism $\RC(\lambda)^* \iso \RC(\lambda)$ by passing through $\RC(\infty)$.

\begin{cor}
Let $\g$ be a symmetrizable Kac-Moody algebra and let $\lambda \in P^+$. Define $\Xi \colon \RC(\lambda) \longrightarrow \RC(\lambda)^*$ by $\Xi(\nu, J) = (\nu, J')$, where the resulting riggings are
\[
x' = x + \inner{h_a}{\lambda}.
\]
Then $\Xi$ is a crystal isomorphism.
\end{cor}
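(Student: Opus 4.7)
The plan is to realize $\Xi$ as the composite
\[
\RC(\lambda) \xrightarrow{\iota} \RC(\infty) \xrightarrow{(\psi_{\lambda,\infty}^*)^{-1}} \RC(\lambda)^*,
\]
where $\iota$ is the natural inclusion (identity on the underlying rigged configuration) and $(\psi_{\lambda,\infty}^*)^{-1}$ is the map appearing in the paragraph preceding the previous theorem: it is the identity on the partitions and adds $\langle h_a, \lambda\rangle$ to each rigging of a row in $(\nu,J)^{(a)}$. Tracking the action on riggings through the composite recovers the stated formula $x' = x + \langle h_a, \lambda\rangle$. Bijectivity is clear from the explicit inverse $x \mapsto x - \langle h_a, \lambda\rangle$, and weight preservation is immediate since $\Xi$ leaves the partitions unchanged.

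Well-definedness reduces to showing $\Xi\bigl(\RC(\lambda)\bigr) \subseteq \RC(\lambda)^*$. For this I would invoke the previous theorem: the map $\Psi$, which on the underlying $\RC(\infty)$-factor is precisely $(\psi_{\lambda,\infty}^*)^{-1}$, is a bijection of $C_\emptyset^*$ onto $\RC(\lambda)^*$. The Proposition at the start of Section~\ref{sec:hw_crystals} identifies $\RC(\lambda)$ with the subcrystal $\{t_\lambda \otimes (\nu, J) \in T_\lambda \otimes \RC(\infty) : \varepsilon_a^*(\nu, J) \leq \langle h_a, \lambda\rangle\}$, whose underlying $\RC(\infty)$-support coincides with that of $C_\emptyset^*$. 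Hence $\iota\bigl(\RC(\lambda)\bigr)$ lies in the domain of $(\psi_{\lambda,\infty}^*)^{-1}$ and its image is sent into $\RC(\lambda)^*$.

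For the crystal morphism property, $\iota$ intertwines the standard operators $(e_a, f_a)$ on both sides, adopting the convention that $f_a$ returns zero when it would exit $\RC(\lambda)$, while $(\psi_{\lambda, \infty}^*)^{-1}$ intertwines the star operators $(e_a^*, f_a^*)$ by the remark recorded right after its definition. To bridge the two crystal structures within the intermediate $\RC(\infty)$, I would use Corollary~\ref{cor:RC_star_involution}, which shows that the $\ast$-involution of $\RC(\infty)$ intertwines $(e_a, f_a)$ with $(e_a^*, f_a^*)$. Combined with the fact that $\RC(\lambda)$ and $\RC(\lambda)^*$ are both connected crystals generated by the common highest weight element $(\nu_\emptyset, J_\emptyset)$ and isomorphic to $B(\lambda)$, this forces $\Xi$ to coincide with the unique crystal isomorphism between them.

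The main obstacle is the final intertwining step: the two factor maps respect different crystal structures on $\RC(\infty)$ individually, so the verification that $\Xi$ is compatible with the crystal operators demands careful bookkeeping — aligning the standard structure carried by $\iota$ with the star structure carried by $(\psi_{\lambda,\infty}^*)^{-1}$ via the $\ast$-involution of Corollary~\ref{cor:RC_star_involution}, while also handling the zero-values correctly when the modified $f_a^*$ (or its $\RC(\lambda)$-truncated counterpart $f_a$) refuses to push a rigging below the allowed bound.
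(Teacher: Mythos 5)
Your decomposition is the same one the paper uses (its entire proof is the one-line identity $\Xi = (\psi_{\lambda,\infty}^*)^{-1} \circ \psi_{\lambda,\infty}$), but the expansion you give contains a false intermediate claim, and the difficulty you flag at the end is a genuine gap that your appeal to uniqueness does not close. First, the underlying $\RC(\infty)$-supports of $\RC(\lambda)$ and of $C_\emptyset^*$ do \emph{not} coincide: the former is cut out by $\varepsilon_a^*(\nu,J) \le \inner{h_a}{\lambda}$, while the latter, being the connected component under the \emph{star} operators, is cut out by $\varepsilon_a(\nu,J) \le \inner{h_a}{\lambda}$. Already in type $A_2$ with $\lambda = \Lambda_1$, the element $f_2 f_1 (\nu_{\emptyset}, J_{\emptyset})$, which has $\nu^{(1)} = \nu^{(2)} = (1)$ with riggings $0$ and $-1$ respectively, satisfies the first condition but not the second (its $\varepsilon_2$ equals $1 > 0 = \inner{h_2}{\Lambda_1}$). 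Second, knowing that $\RC(\lambda)$ and $\RC(\lambda)^*$ are both connected and isomorphic to $B(\lambda)$ gives you a unique isomorphism between them; it does not tell you that the particular map $\Xi$ \emph{is} that isomorphism. That must still be checked on the operators, and it is exactly here that the two halves of your composite respect different crystal structures on the intermediate $\RC(\infty)$.

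The same $A_2$ example shows the verification cannot succeed as set up: $\Xi$ sends $f_2 f_1(\nu_{\emptyset}, J_{\emptyset})$ to the configuration with riggings $0 + 1 = 1$ on $\nu^{(1)}$ and $-1 + 0 = -1$ on $\nu^{(2)}$, which is not even an element of $\RC(\Lambda_1)^*$ (a negative rigging survives), whereas $f_2^* f_1^*(\nu_{\emptyset}, J_{\emptyset})$ has riggings $0$ and $0$. The bridge you correctly identify --- the $\ast$-involution of Corollary~\ref{cor:RC_star_involution}, which intertwines $(e_a,f_a)$ with $(e_a^*,f_a^*)$ on $\RC(\infty)$ --- cannot merely be invoked; it has to be inserted between the two factors. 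The composite $(\psi_{\lambda,\infty}^*)^{-1} \circ \ast \circ \psi_{\lambda,\infty}$ does intertwine the operators and does land in $\RC(\lambda)^*$, but it sends a rigging $x$ of a row of length $i$ in $(\nu,J)^{(a)}$ to $p_i^{(a)}(\nu;\lambda) - x$, not to $x + \inner{h_a}{\lambda}$; in the example it gives $0 \mapsto 0$ and $-1 \mapsto 0$, matching $f_2^* f_1^*(\nu_{\emptyset}, J_{\emptyset})$. So the missing step is not bookkeeping: carrying it out changes the formula, and the composite as you (and the paper's one-line proof) have written it, read literally on riggings, is not the crystal isomorphism.
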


\begin{proof}
We have $\Xi = (\psi_{\lambda,\infty}^*)^{-1} \circ \psi_{\lambda,\infty}$.
\end{proof}

\appendix
\section{\textsc{SageMath} examples}

The crystal $\RC(\infty)$ has been implemented in \textsc{SageMath} \cite{combinat,sage} by the second author and the $*$-crystal has been implemented by the first author.

In order to make the rigged configurations display in a vertical-space-saving manner, we use the following.
\begin{lstlisting}
sage: RiggedConfigurations.global_options(display="horizontal")
\end{lstlisting}
Now let's check our running example using \textsc{SageMath}.  To initialize the rigged configuration from Example \ref{ex:running}, one does
\begin{lstlisting}
sage: RC = crystals.infinity.RiggedConfigurations("D4")
sage: RCstar = crystals.infinity.Star(RC)
sage: nu0star = RCstar.module_generators[0]
sage: nustar = nu0star.f_string([2,1,3,4,2,2,1,3,2]); nustar
-1[ ][ ]0   -3[ ][ ][ ]-2   -1[ ][ ]0   0[ ]0
            -1[ ]-1                          
sage: nustar.f(2)
-1[ ][ ]0   -5[ ][ ][ ][ ]-3   -1[ ][ ]0   0[ ]0
            -1[ ]-1
\end{lstlisting}
Continuing to Examples \ref{ex:kappa0}, \ref{ex:kappa1}, and \ref{ex:kappa2}, we must compute $\kappa_a(\nu,J)$ for $a\in I$.  
\begin{lstlisting}
sage: [nustar.jump(i) for i in nustar.index_set()]
[1, 0, 1, 2] 
\end{lstlisting}
To check Example \ref{ex:kappa0}, we must initialize the corresponding rigged configuration with respect to the usual crystal operators.
\begin{lstlisting}
sage: nu = RC(nustar.value)                            
sage: nu.f(2)
-1[ ][ ]0   -5[ ][ ][ ][ ]-3   -1[ ][ ]0   0[ ]0
            -1[ ]-1 
\end{lstlisting}
Since $\kappa_3(\nu,J) = 1$, we can verify Condition (\ref{item:star5}) of Proposition \ref{prop:star_properties} for this particular rigged configuration, which is the content of Example \ref{ex:kappa1}.
\begin{lstlisting}
sage: nustar.jump(3)
1
sage: RCstar(nu.f(3)).epsilon(3) == nustar.epsilon(3)
True
sage: RC(nustar.f(3).value).epsilon(3) == nu.epsilon(3)
True
\end{lstlisting}
Finally, Example \ref{ex:kappa2} gives a particular instance where $f_a$ and $f_a^*$ commute, since $\kappa_4(\nu,J) =2$.
\begin{lstlisting}
sage: nustar.jump(4)
2
sage: RCstar(nu.f(4)).f(4)
-1[ ][ ]0   -1[ ][ ][ ]-1   -1[ ][ ]0   -2[ ][ ][ ]-1
            -1[ ]-1                                  
sage: RC(nustar.f(4).value).f(4)
-1[ ][ ]0   -1[ ][ ][ ]-1   -1[ ][ ]0   -2[ ][ ][ ]-1
            -1[ ]-1   
\end{lstlisting}

\subsection*{Acknowledgements}

T.S.\ would like to thank Central Michigan University for its hospitality during his visit in October 2015, where this work originated.
This work was aided by computations in {\sc SageMath}~\cite{combinat,sage}.

\bibliography{RC_star}{}

\def\cprime{$'$}
\providecommand{\bysame}{\leavevmode\hbox to3em{\hrulefill}\thinspace}
\providecommand{\MR}{\relax\ifhmode\unskip\space\fi MR }
\providecommand{\MRhref}[2]{%
  \href{http://www.ams.org/mathscinet-getitem?mr=#1}{#2}
}
\providecommand{\href}[2]{#2}
\begin{thebibliography}{10}

\bibitem{CT15}
John Claxton and Peter Tingley, \emph{Young tableaux, multisegments, and {PBW}
  bases}, S\'em. Lothar. Combin. \textbf{73} (2014/15), Art. B73c, 21.
  \MR{3383156}

\bibitem{HKOTT02}
Goro Hatayama, Atsuo Kuniba, Masato Okado, Taichiro Takagi, and Zengo Tsuboi,
  \emph{Paths, crystals and fermionic formulae}, Math{P}hys odyssey, 2001,
  Prog. Math. Phys., vol.~23, Birkh\"auser Boston, Boston, MA, 2002,
  pp.~205--272. \MR{1903978 (2003e:17020)}

\bibitem{HKOTY99}
Goro Hatayama, Atsuo Kuniba, Masato Okado, Taichiro Takagi, and Yasuhiko
  Yamada, \emph{Remarks on fermionic formula}, Recent developments in quantum
  affine algebras and related topics ({R}aleigh, {NC}, 1998), Contemp. Math.,
  vol. 248, Amer. Math. Soc., Providence, RI, 1999, pp.~243--291. \MR{1745263
  (2001m:81129)}

\bibitem{HK06}
Andr{\'e} Henriques and Joel Kamnitzer, \emph{Crystals and coboundary
  categories}, Duke Math. J. \textbf{132} (2006), no.~2, 191--216. \MR{2219257
  (2007m:17020)}

\bibitem{HK02}
Jin Hong and Seok-Jin Kang, \emph{Introduction to quantum groups and crystal
  bases}, Graduate Studies in Mathematics, vol.~42, American Mathematical
  Society, Providence, RI, 2002. \MR{1881971 (2002m:17012)}

\bibitem{HL08}
Jin Hong and Hyeonmi Lee, \emph{Young tableaux and crystal
  {$\mathcal{B}(\infty)$} for finite simple {L}ie algebras}, J. Algebra
  \textbf{320} (2008), no.~10, 3680--3693. \MR{2457716 (2009j:17008)}

\bibitem{HL12}
\bysame, \emph{Young tableaux and crystal {$\mathcal{B}(\infty)$} for the
  exceptional {L}ie algebra types}, J. Combin. Theory Ser. A \textbf{119}
  (2012), no.~2, 397--419. \MR{2860601 (2012i:17012)}

\bibitem{JL09}
Nicolas Jacon and C{\'e}dric Lecouvey, \emph{Kashiwara and {Z}elevinsky
  involutions in affine type {$A$}}, Pacific J. Math. \textbf{243} (2009),
  no.~2, 287--311. \MR{2552260 (2010k:20012)}

\bibitem{JS15}
Yong Jiang and Jie Sheng, \emph{An insight into the description of the crystal
  structure for {M}irkovi\'c-{V}ilonen polytopes}, preprint
  (\arxiv{1501.07628}), 2015.

\bibitem{Kamnitzer07}
Joel Kamnitzer, \emph{The crystal structure on the set of
  {M}irkovi\'c-{V}ilonen polytopes}, Adv. Math. \textbf{215} (2007), no.~1,
  66--93. \MR{2354986 (2009a:17021)}

\bibitem{Kamnitzer10}
\bysame, \emph{Mirkovi\'c-{V}ilonen cycles and polytopes}, Ann. of Math. (2)
  \textbf{171} (2010), no.~1, 245--294. \MR{2630039 (2011g:20070)}

\bibitem{KT09}
Joel Kamnitzer and Peter Tingley, \emph{A definition of the crystal commutor
  using {K}ashiwara's involution}, J. Algebraic Combin. \textbf{29} (2009),
  no.~2, 261--268. \MR{2475637 (2010e:17011)}

\bibitem{K91}
Masaki Kashiwara, \emph{On crystal bases of the $q$-analogue of universal
  enveloping algebras}, Duke Math. J. \textbf{63} (1991), no.~2, 465--516.
  \MR{1115118 (93b:17045)}

\bibitem{K93}
\bysame, \emph{The crystal base and {L}ittelmann's refined {D}emazure character
  formula}, Duke Math. J. \textbf{71} (1993), no.~3, 839--858. \MR{1240605
  (95b:17019)}

\bibitem{K95}
\bysame, \emph{On crystal bases}, Representations of groups ({B}anff, {AB},
  1994), CMS Conf. Proc., vol.~16, Amer. Math. Soc., Providence, RI, 1995,
  pp.~155--197. \MR{1357199 (97a:17016)}

\bibitem{KS97}
Masaki Kashiwara and Yoshihisa Saito, \emph{Geometric construction of crystal
  bases}, Duke Math. J. \textbf{89} (1997), no.~1, 9--36. \MR{1458969
  (99e:17025)}

\bibitem{KKR86}
S.~V. Kerov, A.~N. Kirillov, and N.~Yu. Reshetikhin, \emph{Combinatorics, the
  {B}ethe ansatz and representations of the symmetric group}, Zap. Nauchn. Sem.
  Leningrad. Otdel. Mat. Inst. Steklov. (LOMI) \textbf{155} (1986),
  no.~Differentsialnaya Geometriya, Gruppy Li i Mekh. VIII, 50--64, 193.
  \MR{869576 (88i:82021)}

\bibitem{KR86}
A.~N. Kirillov and N.~Yu. Reshetikhin, \emph{The {B}ethe ansatz and the
  combinatorics of {Y}oung tableaux}, Zap. Nauchn. Sem. Leningrad. Otdel. Mat.
  Inst. Steklov. (LOMI) \textbf{155} (1986), no.~Differentsialnaya Geometriya,
  Gruppy Li i Mekh. VIII, 65--115, 194. \MR{869577 (88i:82020)}

\bibitem{KOSTY06}
Atsuo Kuniba, Masato Okado, Reiho Sakamoto, Taichiro Takagi, and Yasuhiko
  Yamada, \emph{Crystal interpretation of {K}erov-{K}irillov-{R}eshetikhin
  bijection}, Nuclear Phys. B \textbf{740} (2006), no.~3, 299--327. \MR{2214663
  (2007a:81061)}

\bibitem{LV11}
Aaron~D. Lauda and Monica Vazirani, \emph{Crystals from categorified quantum
  groups}, Adv. Math. \textbf{228} (2011), no.~2, 803--861. \MR{2822211
  (2012i:17013)}

\bibitem{LZ11}
Bin Li and Hechun Zhang, \emph{Path realization of crystal {$B(\infty)$}},
  Front. Math. China \textbf{6} (2011), no.~4, 689--706. \MR{2819972
  (2012g:17022)}

\bibitem{Lusztig90}
George Lusztig, \emph{Canonical bases arising from quantized enveloping
  algebras}, J. Amer. Math. Soc. \textbf{3} (1990), no.~2, 447--498.
  \MR{1035415 (90m:17023)}

\bibitem{Lusztig93}
\bysame, \emph{Introduction to quantum groups}, Progress in Mathematics, vol.
  110, Birkh\"auser Boston, Inc., Boston, MA, 1993. \MR{1227098 (94m:17016)}

\bibitem{MW86}
C.~M{\oe}glin and J.-L. Waldspurger, \emph{Sur l'involution de {Z}elevinski},
  J. Reine Angew. Math. \textbf{372} (1986), 136--177. \MR{863522 (88c:22019)}

\bibitem{NS08}
Satoshi Naito and Daisuke Sagaki, \emph{A modification of the
  {A}nderson-{M}irkovi\'c conjecture for {M}irkovi\'c-{V}ilonen polytopes in
  types {$B$} and {$C$}}, J. Algebra \textbf{320} (2008), no.~1, 387--416.
  \MR{2417995 (2009e:20106)}

\bibitem{N99}
Toshiki Nakashima, \emph{Polyhedral realizations of crystal bases for
  integrable highest weight modules}, J. Algebra \textbf{219} (1999), no.~2,
  571--597. \MR{1706829 (2000g:17020)}

\bibitem{OSS13}
Masato Okado, Reiho Sakamoto, and Anne Schilling, \emph{Affine crystal
  structure on rigged configurations of type {$D_n^{(1)}$}}, J. Algebraic
  Combin. \textbf{37} (2013), no.~3, 571--599. \MR{3035517}

\bibitem{OS12}
Masato Okado and Nobumasa Sano, \emph{K{KR} type bijection for the exceptional
  affine algebra {$E_6^{(1)}$}}, Algebraic groups and quantum groups, Contemp.
  Math., vol. 565, Amer. Math. Soc., Providence, RI, 2012, pp.~227--242.
  \MR{2932429}

\bibitem{OSS03}
Masato Okado, Anne Schilling, and Mark Shimozono, \emph{A crystal to rigged
  configuration bijection for nonexceptional affine algebras}, Algebraic
  combinatorics and quantum groups, World Sci. Publ., River Edge, NJ, 2003,
  pp.~85--124. \MR{2035131 (2005b:17037)}

\bibitem{OSS03II}
\bysame, \emph{Virtual crystals and fermionic formulas of type
  {$D^{(2)}_{n+1},A^{(2)}_{2n}$}, and {$C^{(1)}_n$}}, Represent. Theory
  \textbf{7} (2003), 101--163 (electronic). \MR{1973369 (2004f:17023)}

\bibitem{OSS03III}
\bysame, \emph{Virtual crystals and {K}leber's algorithm}, Comm. Math. Phys.
  \textbf{238} (2003), no.~1-2, 187--209. \MR{1989674 (2004c:17034)}

\bibitem{PS15}
Jianping Pan and Travis Scrimshaw, \emph{Virtualization map for the
  {L}ittelmann path model}, preprint (\arxiv{1509.08103}), 2015.

\bibitem{combinat}
The {S}age-{C}ombinat community, \emph{{S}age-{C}ombinat: enhancing {S}age as a
  toolbox for computer exploration in algebraic combinatorics}, 2008, {\tt
  http://combinat.sagemath.org}.

\bibitem{sage}
The Sage Developers, \emph{{S}age {M}athematics {S}oftware ({V}ersion 7.0)},
  2016, \url{http://www.sagemath.org}.

\bibitem{Sai94}
Yoshihisa Saito, \emph{P{BW} basis of quantized universal enveloping algebras},
  Publ. Res. Inst. Math. Sci. \textbf{30} (1994), no.~2, 209--232. \MR{1265471
  (95e:17021)}

\bibitem{Sakamoto14}
Reiho Sakamoto, \emph{Rigged configurations and {K}ashiwara operators}, SIGMA
  Symmetry Integrability Geom. Methods Appl. \textbf{10} (2014), Paper 028, 88.
  \MR{3210607}

\bibitem{SalS15}
Ben Salisbury and Travis Scrimshaw, \emph{A rigged configuration model for
  {$B(\infty)$}}, J. Combin. Theory Ser. A \textbf{133} (2015), 29--57.

\bibitem{SalS15III}
\bysame, \emph{Connecting marginally large tableaux and rigged configurations
  via crystals}, to appear in Algebr. Represent. Theory, 2016.

\bibitem{SalS15II}
\bysame, \emph{Rigged configurations for all symmetrizable types}, preprint
  (\arxiv{1509.07833}), 2016.

\bibitem{Sav09}
Alistair Savage, \emph{Crystals, quiver varieties, and coboundary categories
  for {K}ac-{M}oody algebras}, Adv. Math. \textbf{221} (2009), no.~1, 22--53.
  \MR{2509321 (2010d:17020)}

\bibitem{S05}
Anne Schilling, \emph{A bijection between type {$D^{(1)}_n$} crystals and
  rigged configurations}, J. Algebra \textbf{285} (2005), no.~1, 292--334.
  \MR{2119115 (2006i:17025)}

\bibitem{S06}
\bysame, \emph{Crystal structure on rigged configurations}, Int. Math. Res.
  Not. (2006), Art. ID 97376, 27. \MR{2211139 (2007i:17021)}

\bibitem{SchillingS15}
Anne Schilling and Travis Scrimshaw, \emph{Crystal structure on rigged
  configurations and the filling map for non-exceptional affine types},
  Electron. J. Combin. \textbf{22} (2015), no.~1, Research Paper 73, 56.

\bibitem{SS2006}
Anne Schilling and Mark Shimozono, \emph{{$X=M$} for symmetric powers}, J.
  Algebra \textbf{295} (2006), no.~2, 562--610. \MR{2194969 (2007a:17025)}

\bibitem{SW10}
Anne Schilling and Qiang Wang, \emph{Promotion operator on rigged
  configurations of type {$A$}}, Electron. J. Combin. \textbf{17} (2010),
  no.~1, Research Paper 24, 43. \MR{2595484 (2011f:05333)}

\bibitem{Scrimshaw15}
Travis Scrimshaw, \emph{A crystal to rigged configuration bijection and the
  filling map for type {$D_4^{(3)}$}}, J. Algebra \textbf{448C} (2016),
  294--349.

\bibitem{TW}
Peter Tingley and Ben Webster, \emph{{M}irkovi\'c-{V}ilonen polytopes and
  {K}hovanov-{L}auda-{R}ouquier algebras}, preprint (\arxiv{1210.6921}), 2010.

\bibitem{Zel80}
A.~V. Zelevinsky, \emph{Induced representations of reductive
  {$\mathfrak{p}$}-adic groups. {II}. {O}n irreducible representations of
  {$\mathrm{GL}(n)$}}, Ann. Sci. \'Ecole Norm. Sup. (4) \textbf{13} (1980),
  no.~2, 165--210. \MR{584084 (83g:22012)}

\end{thebibliography}
\bibliographystyle{amsplain}
\end{document}